\begin{document}
\input amssym.def
\newcommand{\singlespace}{
    \renewcommand{\baselinestretch}{1}
\large\normalsize}
\newcommand{\doublespace}{
   \renewcommand{\baselinestretch}{1.2}
   \large\normalsize}
\renewcommand{\theequation}{\thesection.\arabic{equation}}

\def \ten#1{_{{}_{\scriptstyle#1}}}
\def \Z{\mathbb Z}
\def \C{\mathbb C}
\def \R{\mathbb R}
\def \Q{\mathbb Q}
\def \N{\mathbb N}
\def \F{\mathbb F}
\def \l{\lambda}
\def \V{V^{\natural}}
\def \wt{{\rm wt}}
\def \tr{{\rm tr}}
\def \Res{{\rm Res}}
\def \End{{\rm End}}
\def \Aut{{\rm Aut}}
\def \mod{{\rm mod}}
\def \Hom{{\rm Hom}}
\def \im{{\rm im}}
\def \<{\langle}
\def \>{\rangle}
\def \w{\omega}
\def \c{{\tilde{c}}}
\def \o{\omega}
\def \t{\tau }
\def \ch{{\rm ch}}
\def \a{\alpha }
\def \b{\beta}
\def \e{\epsilon }
\def \la{\lambda }
\def \om{\omega }
\def \O{\Omega}
\def \qed{\mbox{ $\square$}}
\def \pf{\noindent {\bf Proof: \,}}
\def \voa{vertex operator algebra\ }
\def \voas{vertex operator algebras\ }
\def \p{\partial}
\def \1{{\bf 1}}
\def \ll{{\tilde{\lambda}}}
\def \H{{\bf H}}
\def \F{{\Bbb F}}
\def \h{{\frak h}}
\def \g{{\frak g}}
\def \rank{{\rm rank}}
\def \({{\rm (}}
\def \){{\rm )}}
\def \Y {\mathcal{Y}}
\def \I {\mathcal{I}}
\def \A {\mathcal{A}}
\def \B {\mathcal {B}}
\def \Cc {\mathcal {C}}
\def \H {\mathcal{H}}
\def \M {\mathcal{M}}
\def \V {\mathcal{V}}
\def \L{\mathcal L}
\def \O{{\bf O}}
\def \1{{\bf 1}}
\def\Ve{V^{0}}
\def\ha{\frac{1}{2}}
\def\se{\frac{1}{16}}
\def\g{\mathfrak g}
\def\h{\mathfrak h}
\def\wh{\widehat{\mathfrak h}}
\def\wv{\widehat{V}}
\def\ww{\widehat{W}}
\def\wg{\widehat{\mathfrak g}}
\def\mraff{\mathrm{aff}}
\def\J{\mathcal J}
\def\E{{\mathbb E}}
\def\s{{\bf s}}
\def\S{{\mathbb S}}
\singlespace
%\doublespace
\newtheorem{thm}{Theorem}[section]
\newtheorem{prop}[thm]{Proposition}
\newtheorem{lem}[thm]{Lemma}
\newtheorem{cor}[thm]{Corollary}
\newtheorem{rem}[thm]{Remark}
\newtheorem{con}[thm]{Conjecture}
\newtheorem*{CPM}{Theorem}
\newtheorem{definition}[thm]{Definition}
\newtheorem{de}[thm]{Definition}

\begin{center}
{\Large {\bf  Representations of the parafermion vertex operator algebras }} \\

\vspace{0.5cm} Chongying Dong\footnote{Supported by NSF grant DMS-1404741 and China NSF grant 11371261}
\\
School of Mathematics, Sichuan University,
 Chengdu 610064 China \& \\
 Department of Mathematics, University of
California, Santa Cruz, CA 95064 USA \\
Li Ren\footnote{Supported by China NSF grant 11301356}\\
 School of Mathematics,  Sichuan University,
Chengdu 610064 China

\end{center}
\hspace{1.5 cm}

\begin{abstract}
The rationality of the parafermion vertex operator algebra $K(\g,k)$ associated to any finite dimensional simple Lie algebra $\g$ and any nonnegative integer $k$ is established and
the irreducible modules are determined.
\end{abstract}

\section{Introduction}

 This paper deals with the representation theory of the parafermion vertex operator algebra $K(\g,k)$ associated to any finite dimensional simple Lie algebra $\g$ and positive integer $k.$ Using the recent results on abelian orbifolds  \cite{CM}, \cite{M} and coset construction \cite{KM}, we establish the rationality of $K(\g,k)$ and determine the irreducible modules. If
$\g=sl_2,$ the classification of irreducible modules and rationality were achieved previously in
\cite{ALY1}- \cite{ALY2}.

The origin of the  parafermion vertex operator algebra is the  theory of $Z$-algebra developed in \cite{LP,LW1,LW2} for constructing irreducible highest weight modules for affine  Kac-Moody algebra. The main idea was to determine the vacuum space of a module for an affine Kac-Moody algebra. The vacuum space is the space of highest weight vectors for the Heisenberg algebra  and is a module for the $Z$-algebra. Using the language of the conformal field theory and vertex operator algebra, this is the coset theory associated to the simple affine vertex operator algebra $L_{\wg}(k,0)$ and its vertex operator  subalgebra $M_{\wh}(k)$ generated by the Cartan subalgebra $\h$ of $\g.$ That is, the parafermion vertex operator algebra $K(\g,k)$ is the commutant of $M_{\wh}(k)$ in $L_{\wg}(k,0)$ or a coset construction associated to the Lie algebra $\g$ and
its Cartan subalgebra  $\h$ \cite{GKO}.

There have been a lot of investigations of the parafermion vertex operator algebras and parafermion conformal field theory in physics (see \cite{BEHHH}, \cite{G}, \cite{GQ}, \cite{H}, \cite{WD}, \cite{We}, \cite{ZF}).
The relation  between the parafermion conformal field theory and the $Z$-algebra has been clarified using the generalized vertex operator algebra in \cite{DL}. But a systematic study of the parafermion vertex operator algebra took place only very recently. A set of generators of the  parafermion vertex operator algebra $K(\g,k)$ was determined in \cite{DLY}, \cite{DLWY} and \cite{DW1}.  Moreover, the parafermion vertex operator algebra $K(\g,k)$ for an arbitrary $\g$ is generated by $K(\g^{\alpha},k_{\alpha})$ where $\g^{\alpha}$ is the 3-dimensional subalgebra isomorphic to $sl_2$ and associated to a root $\alpha,$ and $k_{\alpha}=\frac{2}{\<\alpha,\alpha\>}k.$  From this point of view, $K(sl_2,k)$ is the building block of the general parafermion vertex operator algebra $K(\g,k).$

The representation theory of $K(\g,k)$ has not been understood well. It is proved in \cite{ALY1} that
the Zhu algebra $A(K(sl_2,k))$ is a finite dimensional semisimple associative algebra, the irreducible
modules of $K(sl_2,k)$ are exactly those which appear in the integrable highest weight modules
of level $k$ for affine Kac-Moody algebra $\widehat{sl_2},$ and $K(\g,k)$ is $C_2$-cofinite
for any $\g.$ Also see \cite{DW2} on the $C_2$-cofiniteness of $K(\g,k).$

It is well known that the $L_{\wg}(k,0)$ is a rational vertex operator algebra \cite{FZ}, \cite{L2} and $K(\g,k)$ is also the commutant of lattice vertex operator algebra $V_{\sqrt{k}Q_L}$ in $L_{\wg}(k,0)$ where $Q_L$ is the lattice spanned by the long roots \cite{DLY}, \cite{DW3}. There is a famous conjecture in the
coset construction which says that the commutant $U^c$ of a rational vertex operator subalgebra $U$  in a rational vertex operator algebra $V$ is also rational. So it is widely believed that $K(\g,k)$ should give a new class of rational vertex operator algebras although this can only been proved so far in the case $\g=sl_2$ and
$k\leq 6$ \cite{DLY}.

Here are the main results in this paper: (1) For any simple Lie algebra $\g$ and any positive integer $k$,
the parafermion vertex operator algebra $K(\g,k)$ is rational, (2) The irreducible modules of $K(\g,k)$
are those appeared in the integrable highest weight modules
of level $k$ for affine Kac-Moody algebra $\wg$ and we give some identification among these irreducible  $K(\g,k)$-modules.

Although the parafermion vertex operator algebras $K(\g,k)$ are subalgebras of the affine vertex operator algebras $L_{\wg}(k,0)$ their structure are much more complicated. According to \cite{DLY}, $K(sl_2,k)$ is strongly generated by the Virasoro element $\omega,$ and highest weight vectors $W^3,W^4,W^5$ of weights $3,4,5.$ For each root $\alpha$ let $\omega_{\alpha}, W^i_{\alpha}$ be the corresponding elements in $K(\g^{\alpha},k_{\alpha}).$ Then $K(\g,k)$ is generated by $\omega_{\alpha},$ $W^i_{\alpha}$ for all positive roots $\alpha$ and $i=3,4,5.$ Although the Lie bracket $[Y(u,z_1),Y(v,z_2)]$ is complicated for $u,v\in S_{\alpha}=\{\omega_\alpha, W^i_{\alpha}|i=3,4,5\}$ but computable \cite{DLY}. But if $\alpha, \beta$ are two different positive roots and $u\in S_{\alpha}, v\in S_{\beta}$ we do not know the commutator relation $[Y(u,z_1),Y(v,z_2)]$ in general.  So it is very difficult to prove the main results directly by using the generators and relations.

The rationality of $K(\g,k)$ is relatively easy due to recent results in \cite{CM}, \cite{M}. Let $Q_L$ be the sublattice of root lattice of $\g$ spanned by the long roots. Then $V_{\sqrt{k}Q_L}\otimes K(\g,k)$ is a vertex operator subalgebra of $L_{\wg}(k,0).$ Moreover, $V_{\sqrt{k}Q_L}\otimes K(\g,k)$  can be realized as fixed point subalgebra $L_{\wg}(k,0)^G$ for a finite abelian group $G$. It follows from \cite{CM} that
$V_{\sqrt{k}Q_L}\otimes K(\g,k)$ is rational. Using the rationality of $V_{\sqrt{k}Q_L},$ one can easily conclude that  $K(\g,k)$ is rational.

It follows from \cite{KM} each irreducible $K(\g,k)$-module occurs in an irreducible $\wg$-module $L_{\wg}(k,\Lambda)$
where $\Lambda$ is a dominant weight of finite dimensional Lie algebra $\g$ satisfying $\<\Lambda,\theta\>\leq k,$ and $\theta$ is the maximal root of $\g.$ We denote these irreducible modules by $M^{\Lambda, \lambda}$
in this paper where $\lambda$ lies in $\Lambda+Q$ and $Q$ is the root lattice of $\g.$  We also find some identification of these irreducible $K(\g,k)$-modules by using the simple currents \cite{L2'} and \cite{L4}.
It turns out these identifications are complete \cite{ADJR}. The quantum dimensions and the fusion rules
have also been determined in \cite{DW4} and \cite{ADJR}.

The connection between the parafermion vertex operator algebras and the commutants of $L_{\wg}(k,0)$ in $L_{\wg}(1,0)^{\otimes k}$ has also been investigated in \cite{LY}, \cite{JL1}, \cite{JL2}.

The paper is organized as follows. We review the vertex operator algebra $V_{\wg}(k,0)$ associated to affine Kac-Moody algebra $\wg$ and its irreducible quotient $L_{\wg}(k,0)$ in Section 2. We also discuss the $V_{\wg}(k,0)$-modules $V_{\wg}(k,\Lambda)$ generated by any irreducible highest weight $\g$-module $L_\g(\Lambda).$ We recall from \cite{DW1}  the vertex operator algebra $N(\g,k)$ which is the commutant of the  Heisenberg vertex operator algebra $M_{\wh}(k)$ in $V_{\wg}(k,0).$ We also decompose each
$V_{\wg}(k,0)$-module $V_{\wg}(k,\Lambda)$ into a direct sum of $M_{\wh}(k)\otimes N(\g,k)$-modules. Section 4 is devoted to the study of the  parafermion vertex operator algebra $K(\g,k).$ In particular, $K(\g,k)$ is the irreducible quotient of $N(\g,k)$ and is also the commutant of $M_{\wh}(k)$ in $L_{\wg}(k,0).$ We also decompose each irreducible $L_{\wg}(k,0)$-module $L_{\wg}(k,\Lambda)$ into a direct sum of irreducible $M_{\wh}(k)\otimes K(\g,k)$-modules $M_{\wh}(k,\lambda)\otimes M^{\Lambda,\lambda}$ for $\lambda\in \Lambda+Q$ where $Q$ is the root lattice of $\g.$ In addition, we  investigate the relation between these irreducible $K(\g,k)$-modules $M^{\Lambda,\lambda}.$   Both rationality and classification of irreducible modules for $K(\g,k)$ for any simple Lie algebra $\g$ are obtained in Section 5.

We assume the readers are familiar with the admissible modules, rationality, $A(V)$-theory as presented in \cite{DLM1}, \cite{DLM2} and \cite{Z}.

We thank Professors Tomoyuki Arakawa, Cuipo Jiang and Haisheng Li for many valuable discussions and suggestions.

\section{Affine vertex operator algebras}
\setcounter{equation}{0}

In this section, we recall the basics of vertex operator algebras
associated to affine Lie algebras following \cite{FZ} and
\cite{LL}.

Fix a finite dimensional simple Lie algebra  $\g$ with a Cartan
subalgebra $\h.$ Denote the corresponding root system by $\Delta$
and the root lattice by $Q.$ Let $\< ,\>$ be an invariant symmetric
nondegenerate bilinear form on $\g$ such that $\<\a,\a\>=2$ if
$\alpha$ is a long root, where we have identified $\h$ with $\h^*$
via $\<,\>.$ We denote the image of $\alpha\in
\h^*$ in $\h$ by $t_\alpha.$ That is, $\alpha(h)=\<t_\alpha,h\>$
for any $h\in\h.$ Fix simple roots $\{\alpha_1,...,\alpha_l\}$
and let $\Delta_+$ be the set of corresponding positive roots.
Denote the highest root by $\theta.$

For $\alpha\in \Delta_+$ we denote the root space by $\g_{\alpha}$
and  fix $x_{\pm
\alpha}\in \g_{\pm \alpha}$ and
$h_{\alpha}=\frac{2}{\<\a,\a\>}t_\alpha\in \h$ such that
 $[x_\a,x_{-\a}]=h_{\a}, [h_\a,x_{\pm \a}]=\pm 2x_{\pm\a}.$ That
is, $\g^{\a}=\C x_{\a}+\C h_{\alpha}+\C x_{-\alpha}$ is isomorphic
to $sl_2$ by sending $x_\a$ to $\left(\begin{array}{ll} 0 & 1\\ 0 &
0\end{array}\right),$ $x_{-\a}$ to $\left(\begin{array}{ll} 0 & 0\\
1 & 0\end{array}\right)$ and $h_\a$ to $\left(\begin{array}{ll} 1 &
0\\ 0 & -1\end{array}\right).$ Then
$\<h_\a,h_\a\>=2\frac{\<\theta,\theta\>}{\<\alpha,\alpha\>}$ and
$\<x_{\a},x_{-\a}\>=\frac{\<\theta,\theta\>}{\<\alpha,\alpha\>}$
for all
$\alpha\in \Delta.$

Recall that $\widehat{\mathfrak g}= \g \otimes
\C[t,t^{-1}] \oplus \C K$ is the affine Lie algebra associated to $\g$ with Lie bracket
$$[a(m), b(n)] = [a,b](m+n) + m \< a,b \> \delta_{m+n,0}K, [K, \widehat{\mathfrak g}]=0$$
for $a, b \in \g$ and $m,n\in \Z$ where $a(m)=a \otimes t^m$.
Note that $\widehat{\h}=\h \otimes
\C[t,t^{-1}] \oplus \C K$ is a subalgebra of $\widehat{\g}$.

Fix a
positive integer $k$ and a weight $\Lambda\in \h^*.$ Let
$L_\g(\Lambda)$ be the irreducible highest weight module for $\g$
with highest weight $\Lambda$ and
\begin{equation*}
 V_{\widehat{\g}}(k,\Lambda) = Ind_{\g \otimes \C[t]\oplus \C
K}^{\widehat{\g}}L_\g(\Lambda)
\end{equation*}
be the induced $\widehat{\g}$-module where ${\g} \otimes \C[t]t$
acts as $0,$ $\g=\g\otimes t^0$ acts as $\g$ and $K$ acts as $k$
on $L_\g(\Lambda)$. Then $V_{\widehat{\g}}(k,\Lambda)$ has a unique maximal
submodule $\J(k,\Lambda)$ and we denote the irreducible quotient by
$L_{\widehat{\g}}(k,\Lambda)$. In the case $\Lambda=0,$ the maximal
submodule $\J=\J(k,0)$ of $V_{\widehat{\g}}(k,0)$ is generated by
$x_{\theta}(-1)^{k+1}1$ \cite{K} where $1=1\otimes 1\in V_{\widehat{\g}}(k,0).$ The $\J$ is also generated by
$x_{-\theta}(0)^{k+1}x_{\theta}(-1)^{k+1}1$ \cite{DLWY}, \cite{DW1}.
Moreover, $ L_{\widehat{\g}}(k,\Lambda)$ is integrable if
and only if $\Lambda$ is a dominant weight such that
$\<\Lambda,\theta\>\leq k$ \cite{K}. Denote the set of such $\Lambda$ by $P^k_+.$

It is well known that
$V_{\widehat{\g}}(k,0)$ is a vertex operator algebra generated by $a(-1)\1$ for
$a\in \g$ such that
$$Y(a(-1)\1,z) = a(z)=\sum_{n \in \Z} a(n)z^{-n-1}$$
 with the vacuum vector $\1=1$ and the
Virasoro vector
\begin{align*}
\omega_{\mraff} &= \frac{1}{2(k+h^{\vee})} \Big(
\sum_{i=1}^{l}u_i(-1)u_i(-1)\1 +\sum_{ \alpha\in\Delta}
\frac{\<\a,\a\>}{2}x_{\alpha}(-1)x_{-\alpha}(-1)\1 \Big)
\end{align*}
of central charge $\frac{k\dim \g}{k+h^{\vee}}$ (see \cite{FZ}, \cite{LL}), where $h^{\vee}$ is the dual
Coxeter number of $\g$ and $\{u_i|i=1,\ldots,l\}$ is an
orthonormal basis of $\mathfrak h.$ Moreover, each $V_{\widehat{\g}}(k,\Lambda)$
is a module for $V_{\widehat{\g}}(k,0)$ for any $\Lambda$ \cite{FZ}, \cite{LL}. As usual, we let $Y(\omega_{\mraff},z)=\sum_{n\in\Z}L_{\mraff}(n)z^{-n-2}.$
Then
$$V_{\widehat{\g}}(k,\Lambda)=\bigoplus_{n\geq 0}V_{\widehat{\g}}(k,\Lambda)_{n_{\Lambda}+n}$$
where $V_{\widehat{\g}}(k,\Lambda)_{n_{\Lambda}+n}=\{v\in V_{\widehat{\g}}(k,\Lambda)|L_{\mraff}(0)v=(n_{\Lambda}+n)v\},$ $n_{\Lambda}=\frac{(\Lambda,\Lambda+2\rho)}{2(k+h^{\vee})}$ and
$\rho=\frac{1}{2}\sum_{\alpha\in \Delta_+}\alpha.$ The top level $V_{\widehat{\g}}(k,\Lambda)_{n_{\Lambda}}$ of $V_{\widehat{\g}}(k,\Lambda)$ is exactly the $L_\g(\Lambda).$

Now $\J$ is the maximal ideal of $V_{\widehat{\g}}(k,0)$ and
 $L_{\widehat{\g}}(k,0)$ is a simple, rational vertex operator algebra
such that the irreducible $L_{\widehat{\g}}(k,0)$-modules are exactly
the integrable highest weight $\widehat{\g}$-modules $L_{\wg}(K,\Lambda)$ of level $k$ for
$\Lambda\in P_+^k$ (cf. \cite{DL}, \cite{FZ}, \cite{LL}).

\section{Vertex operator algebras $N(\g,k)$ }
\setcounter{equation}{0}

We recall from  \cite{DLY}, \cite{DLWY},  \cite{DL}, \cite{DW1} the construction of vertex operator algebras $N(\g,k)$ and their generators.

Let $M_{\widehat{\h}}(k)$ be the vertex operator subalgebra of $V_{\widehat{\g}}(k,0)$
generated by $h(-1)\1$ for $h\in \mathfrak h$ with the Virasoro
element
$$\omega_{\mathfrak h} = \frac{1}{2k}
\sum_{i=1}^{l}u_i(-1)u_i(-1)\1$$
of central charge $l.$ For $\lambda\in
{\mathfrak h}^*,$ denote by  $M_{\widehat{\h}}(k,\lambda)$ the irreducible
highest weight module for $\wh$ with a highest weight vector
$e^\lambda$ such that $h(0)e^\lambda = \lambda(h) e^\lambda$ for
$h\in \mathfrak h.$

Let $N(\g,k)=\{v\in V_{\widehat{\g}}(k,0)|u_nv=0, u\in M_{\widehat{\h}}(k), n\geq 0\}$ be  the commutant
\cite{FZ} of $M_{\wh}(k)$ in $V_{\widehat{\g}}(k,0)$. It is easy to see that
 $N(\g,k)=\{v\in V_{\widehat{\g}}(k,0)|h(n)v=0, h\in \h, n\geq 0\}.$
The $N(\g,k)$ is a vertex
operator algebra with the Virasoro vector $\omega =
\omega_{\mraff} - \omega_{\mathfrak h}$ whose central charge is
$\frac{k\dim \g}{k+h^{\vee}}-l.$

For $\alpha\in \Delta_+,$  let $k_{\alpha}=\frac{\<\theta,\theta\>}{\<\alpha,\alpha\>}k.$ Clearly, $k_{\alpha}$ is a nonnegative integer.
Set
\begin{equation}\label{eq:w3}
\begin{split}
\omega_{\alpha} &=\frac{1}{2k_{\alpha}(k_{\alpha}+2)}( -k_{\alpha} h_\alpha(-2)\1
-h_\alpha(-1)^{2}\1 +2k_{\alpha}x_{\alpha}(-1)x_{-\alpha}(-1)\1),
\end{split}
\end{equation}
\begin{equation}\label{eq:W3}
\begin{split}
W_{\alpha}^3 &= k_{\alpha}^2 h_\alpha(-3)\1 + 3 k_{\alpha} h_\alpha(-2)h_\alpha(-1)\1
+
2h_\alpha(-1)^3\1 \\
&-6k_{\alpha} h_\alpha(-1)x_{\alpha}(-1)x_{-\alpha}(-1)\1
 +3 k_{\alpha}^2x_{\alpha}(-2)x_{-\alpha}(-1)\1 -3
k_{\alpha}^2x_{\alpha}(-1)x_{-\alpha}(-2)\1.
\end{split}
\end{equation}
One can also see \cite{DLY}, \cite{DW1} for the definition of the highest vector $W_\alpha^4,$ $W_\alpha^5$
of weights 4 and 5.
Then $N(\g,k)$ is generated by $\omega_{\alpha}$, $W_{\alpha}^3$ for $\alpha\in\Delta_+$ \cite{DW1} and
 the subalgebra of $N(\g,k)$ generated by $\omega_{\alpha}$ and $W_\alpha^3$ for fixed $\alpha$ is isomorphic to
$N(sl_2,k_{\a})$ (cf. \cite{DLY}, \cite{DLWY}, \cite{DW1}).

\begin{rem} It is proved in \cite{DLY} that $N(sl_2,k)$ is strongly generated by $\omega, W^3, W^4, W^5$
where we omit the $\alpha$ as there is only one positive root. But it is not clear if $N(\g,k)$ is strongly generated by $\omega_{\alpha}$, $W_{\alpha}^i$ for $\alpha\in\Delta_+$ and $i=3,4,5.$ We also do not know the commutator relation $[Y(u,z_1),Y(v,z_2)]$ in general for these generators.
It is definitely important to find the relation $[Y(u,z_1),Y(v, z_2)]$ for the generators explicitly. This will help to understand the structure of $N(\g,k)$ better.
\end{rem}

For  $\Lambda, \lambda \in {\mathfrak h}^*$, set
\begin{equation*}
V_{\widehat{\g}}(k,\Lambda)(\lambda)=\{v\in V_{\widehat{\g}}(k,\Lambda)|h(0)v=\lambda(h) v, \forall\;
h\in\mathfrak h\}.
\end{equation*} Then we have
\begin{equation}\label{eq:V-dec}
V_{\widehat{\g}}(k,\Lambda)=\oplus_{\lambda\in Q+\Lambda}V_{\widehat{\g}}(k,\Lambda)(\lambda).
\end{equation}
Note that $M_{\widehat{\h}}(k)\otimes N(\g,k)$ is a vertex operator subalgebra of $V_{\wg}(k,0)$ isomorphic to  $V_{\widehat{\g}}(k,0)(0)$ and
$V_{\widehat{\g}}(k,\Lambda)(\lambda)=M_{\widehat{\h}}(k,\lambda)\otimes N^{\Lambda,\lambda}$ as a module for $M_{\widehat{\h}}(k)\otimes N(\g,k)$ where
\begin{equation*}
N^{\Lambda, \lambda}= \{ v \in V_{\widehat{\g}}(k,\Lambda)\,|\, h(m)v =\lambda(h)\delta_{m,0}v
\text{ for }  h\in \mathfrak h, m \ge 0\}
\end{equation*}
is the space of highest weight vectors with highest weight $\lambda$ for
$\wh.$ Clearly, $N(\g,k)=N^{0,0}.$

For any $\alpha\in \Delta,$ the subalgebra of $N(\g,k)$ generated by $\omega_{\alpha}, W^3_{\alpha}$
is isomorphic to $N(\g^{\alpha},k_{\alpha})$ and the subalgebra of $V_{\widehat{\g}}(k,0)(0)$ generated by
$\omega_{\alpha}, W^3_{\alpha}, t_{\alpha}(-1)\1$ is isomorphic to $V_{\widehat{\g^\alpha}}(k_\alpha,0)(0)$
\cite{DW1}. Consequently, we regard $N(\g,k_{\alpha})$ as a subalgebra of $N(\g,k),$
and $V_{\widehat{\g^\alpha}}(k_\alpha,0)(0)$ as a subalgebra of $V_{\widehat{\g}}(k,0)(0).$
Here is a stronger result on the generators of $N(\g,k).$ Recall that $\{\alpha_1,...,\alpha_l\}$ are the simple roots.

\begin{prop}\label{generator} The vertex operator algebra $N(\g,k)$ is generated by  $N(\g^{\alpha_i},k_{\alpha_i})$ or $\omega_{\alpha_i}$, $W_{\alpha_i}^3$
for $i=1,...,l.$
\end{prop}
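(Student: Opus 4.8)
The plan is to leverage the already-stated fact (from \cite{DW1}) that $N(\g,k)$ is generated by the full collection $\{\omega_\alpha, W_\alpha^3 \mid \alpha\in\Delta_+\}$, and to show that the subalgebra $U$ generated only by the simple-root pieces $\{\omega_{\alpha_i}, W_{\alpha_i}^3 \mid i=1,\dots,l\}$ already contains $\omega_\alpha$ and $W_\alpha^3$ for every positive root $\alpha$. The natural inductive parameter is the height of $\alpha$: write $\alpha = \sum_{i} c_i\alpha_i$ and induct on $\mathrm{ht}(\alpha)=\sum_i c_i$, the base case $\mathrm{ht}(\alpha)=1$ being the simple roots themselves, which lie in $U$ by definition.

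For the inductive step, I would take a positive root $\alpha$ of height $\geq 2$ and find a simple root $\alpha_i$ with $\beta := \alpha-\alpha_i$ again a positive root (standard in root-system theory: any non-simple positive root can be lowered by a simple root while staying positive). The key structural input is that $\omega_\alpha$ and $W_\alpha^3$ live inside $V_{\widehat{\g^\alpha}}(k_\alpha,0)(0)\subset V_{\widehat{\g}}(k,0)(0)$, and by the decomposition \eqref{eq:V-dec} they are built from the root vectors $x_{\pm\alpha}(-n)$ and $h_\alpha(-n)$. The idea is to realize $x_{\pm\alpha}$ as a bracket $[x_{\pm\alpha_i}, x_{\pm\beta}]$ (up to a nonzero scalar), and then to show that the corresponding vertex-algebra elements $\omega_\alpha, W_\alpha^3$ can be obtained from $\omega_{\alpha_i}, W_{\alpha_i}^3, \omega_\beta, W_\beta^3$ by suitable vertex-operator products (components of $Y(\cdot,z)$). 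Concretely, one must extract from the $sl_2$-triple data attached to $\alpha_i$ and to $\beta$ — both of which live in $U$ by the inductive hypothesis applied to $\beta$ — the quadratic expressions $x_\alpha(-1)x_{-\alpha}(-1)\1$, $h_\alpha(-n)\1$, etc. that appear in \eqref{eq:w3} and \eqref{eq:W3}.

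The main obstacle, and the part requiring real work, is precisely this last extraction: the generators $\omega_\alpha$, $W_\alpha^3$ are specific quadratic and cubic polynomials in the modes of $x_{\pm\alpha}$ and $h_\alpha$, whereas the products of $\omega_{\alpha_i}, W_{\alpha_i}^3$ with $\omega_\beta, W_\beta^3$ will a priori produce a larger span of vectors of the same conformal weight, with coefficients depending on the structure constants $N_{\alpha_i,\beta}$ and on the inner products $\langle\alpha_i,\beta\rangle$. I would organize this by first recovering $t_\alpha(-1)\1$ (equivalently $h_\alpha(-1)\1$ up to scalar): note $t_\alpha = t_{\alpha_i} + t_\beta$, and $t_{\alpha_i}(-1)\1$, $t_\beta(-1)\1$ are recoverable from $\omega_{\alpha_i}$, $\omega_\beta$ — but wait, these live in the degree-zero space $V_{\widehat\g}(k,0)(0)$, so actually one works inside $V_{\widehat{\g}}(k,0)(0)$ and uses that $U$ together with the Heisenberg part generates the relevant $V_{\widehat{\g^\alpha}}(k_\alpha,0)(0)$. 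A cleaner route may be to pass through the enveloping structure: show that the Lie algebra of modes generated by the components of $Y(\omega_{\alpha_i},z), Y(W_{\alpha_i}^3,z)$ over all simple $i$ contains the affine modes needed, using the known commutator relations within each $N(\g^{\alpha_i},k_{\alpha_i})$ from \cite{DLY} and the fact that $x_{\pm\alpha_i}$-type operators generate the affine algebra. I expect the proof to reduce the claim to a finite, root-system-combinatorial bookkeeping plus an application of the $sl_2$ results of \cite{DLY}, \cite{DW1}, with the genuine content being that the simple-root subalgebras suffice to reconstruct, via nested vertex products mirroring the brackets $[x_{\alpha_i},x_\beta]$, the full set of generators of \cite{DW1}.
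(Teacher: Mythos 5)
Your plan (induct on the height of $\alpha$ and recover $\omega_\alpha$, $W_\alpha^3$ for non-simple positive roots from the simple-root generators) is plausible as a target statement, but the proposal never actually carries out the one step on which everything hinges: producing $\omega_\alpha$ and $W_\alpha^3$ (or, in your amended formulation, showing that $U$ together with $M_{\widehat{\h}}(k)$ generates $V_{\widehat{\g^\alpha}}(k_\alpha,0)(0)$) from vertex-operator products of $\omega_{\alpha_i},W_{\alpha_i}^3,\omega_\beta,W_\beta^3$. You explicitly flag this "extraction" as the main obstacle and offer only heuristics, so as it stands this is a proof outline with the essential content missing. Moreover, the suggested "cleaner route" cannot work as described: every mode of $Y(u,z)$ for $u$ in the zero-weight subalgebra $V_{\widehat{\g}}(k,0)(0)$ (in particular for $u=\omega_{\alpha_i},W_{\alpha_i}^3$) preserves the $\h$-weight grading of $V_{\widehat{\g}}(k,\Lambda)$ — these operators lie in (a completion of) $U(\wg)(0)$ — whereas the individual affine modes $x_{\pm\alpha_i}(n)$ shift the $\h$-weight by $\pm\alpha_i\neq 0$. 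Hence the Lie algebra of modes generated by your simple-root generators can never contain the operators $x_{\pm\alpha_i}(n)$, and "the fact that $x_{\pm\alpha_i}$-type operators generate the affine algebra" is not available inside the coset or inside $V_{\widehat{\g}}(k,0)(0)$; only zero-weight combinations such as $x_{\alpha_i}(m)x_{-\alpha_i}(n)$ are.

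The paper closes exactly this gap by a different mechanism, which avoids reconstructing $\omega_\alpha,W_\alpha^3$ for non-simple $\alpha$ altogether. It works in $V_{\widehat{\g}}(k,0)(0)=M_{\widehat{\h}}(k)\otimes N(\g,k)$ and starts from the generation result of \cite{DW1} (the elements $t_\alpha(-1)\1$ and $x_{-\alpha}(-2)x_\alpha(-1)\1$, $\alpha\in\Delta_+$, generate). Because every root vector is an iterated bracket of simple root vectors, the zero-weight space is spanned by monomials in Heisenberg modes and modes of $x_{\pm\alpha_i}$ for simple $\alpha_i$ only; Propositions 4.5.8 and 4.5.7 of \cite{LL} then allow one to regroup such monomials into products of modes of elements of the rank-one subalgebras $V_{\widehat{\g^{\alpha_i}}}(k_{\alpha_i},0)(0)$, each of which is generated by $t_{\alpha_i}(-1)\1$, $\omega_{\alpha_i}$, $W_{\alpha_i}^3$. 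Splitting off the Heisenberg factor then yields the proposition. If you want to rescue your height induction, you would need to supply an argument of exactly this spanning/regrouping type (structure constants $N_{\alpha_i,\beta}$ and all), since nothing in the inductive setup by itself forces the quadratic and cubic expressions \eqref{eq:w3}, \eqref{eq:W3} for $\alpha=\alpha_i+\beta$ to appear among the vertex products you can form.
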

\begin{proof} From \cite{DW1} we know that the vertex operator subalgebra $V_{\widehat{\g}}(k,0)(0)=M_{\widehat{\h}}(k)\otimes N(\g,k)$ is generated by $t_{\alpha}(-1)\1$ and $x_{-\alpha}(-2)x_{\alpha}(-1)\1$ for $\alpha\in\Delta_{+}.$
We first prove that $V_{\widehat{\g}}(k,0)(0)$ is generated by $t_{\alpha_{i}}(-1)\1$ and $x_{-\alpha_i}(-2)x_{\alpha_i}(-1)\1$ for $1\leq i \leq l.$ In other words, we only need the simple roots
for the generators.

Since every positive root can be generated from simple roots we see that $V_{\widehat{\g}}(k_{\alpha},0)(0)$ is spanned
by
$$a_1(-m_1)\cdots a_s(-m_s)x_{\beta_{1}}(-n_{1})x_{\beta_{2}}(-n_{2})\cdots
x_{\beta_{t}}(-n_{t})\1$$
where $a_i\in \mathfrak h, \beta_j\in\{\pm\alpha_1,...,\pm\alpha_l\}, m_i>0, n_j\geq 0$ and
$\beta_{1}+\beta_{2}+\cdots+\beta_{t}=0.$
By Proposition 4.5.8 of \cite{LL} we see that
$V_{\widehat{\g}}(k,0)(0)$  is spanned by
$$a_1(-m_1)\cdots a_s(-m_s)x_{\beta_{1}}(n_{1})x_{-\beta_{1}}(p_{1})
x_{\beta_{2}}(n_{2})x_{-\beta_{2}}(p_{2})\cdots
x_{\beta_{t}}(n_{t})x_{-\beta_{t}}(p_{t})\1$$
where $a_i, m_i$ are as before and $\beta_j\in \{\alpha_1,...,\alpha_l\},$  $n_j, p_j\in\Z.$
It follows from Proposition 4.5.7 \cite{LL} that  $V_{\widehat{\g}}(k,0)(0)$
is spanned by
$$a_1(-m_1)\cdots a_s(-m_s)u^1_{n_1}\cdots u^t_{n_t}\1$$
where $u^j\in V_{\widehat{\g^{\beta_j}}}(k_{\beta_j},0)(0),$ $\beta_i\in \{\alpha_1,...,\alpha_l\},$
and $n_j\in\Z.$

Since each $V_{\widehat{\g^{\alpha_i}}}(k_{\alpha_i},0)(0)$ is generated by $t_{\alpha_i}(-1)$
and $\omega_{\alpha_i}, W^3_{\alpha_i}$ we see immediately that $N(\g,k)$ is
generated by  $\omega_{\alpha_i}$, $W_{\alpha_i}^3$ or $N(\g^{\alpha_i},k_{\alpha_i})$
for $i=1,...,l.$ The proof is complete.
\end{proof}

Next we discuss the decomposition of $N^{\Lambda,\lambda}$ into the direct sum of weight spaces.
Consider the weight space decomposition $L_\g(\Lambda)=\bigoplus_{\lambda\in {\mathfrak h}^*}L_\g(\Lambda)_{\lambda}$ where $L_\g(\Lambda)_{\lambda}$ is the weight space of $L_\g(\Lambda)$ with weight $\lambda.$
Let $P(L_\g(\Lambda))=\{\lambda\in {\mathfrak h}^*|L_\g(\Lambda)_{\lambda}\ne 0\}$ be the weights of $L_\g(\Lambda).$
\begin{lem}\label{l3.1} Let $\lambda\in P(L_\g(\Lambda)).$ Then the $N(\g,k)$-module
\begin{equation}\label{3.1+}
N^{\Lambda,\lambda}=\bigoplus_{n\geq 0}N^{\Lambda,\lambda}_{n_\Lambda-\frac{\<\lambda,\lambda\>}{2k}+n}
\end{equation}
is generated by the irreducible $A(N(\g,k))$-module $N^{\Lambda,\lambda}_{n_\Lambda-\frac{\<\lambda,\lambda)}{2k}}=L_\g(\Lambda)_\lambda$ where
$$N^{\Lambda,\lambda}_{n_\Lambda-\frac{\<\lambda,\lambda\>}{2k}+n}=\{w\in N^{\Lambda,\lambda}|L(0)w=(n_\Lambda-\frac{\<\lambda,\lambda\>}{2k}+n)w\}$$
and $Y(\omega,z)=\sum_{n\in\Z }L(n)z^{-n-2}.$
\end{lem}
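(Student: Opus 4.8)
The plan is to establish Lemma \ref{l3.1} by exploiting the tensor-product decomposition $V_{\widehat{\g}}(k,\Lambda)(\lambda)=M_{\widehat{\h}}(k,\lambda)\otimes N^{\Lambda,\lambda}$ together with the well-understood structure of $V_{\widehat{\g}}(k,\Lambda)$ as an $L_{\mraff}(0)$-graded space. First I would compute the conformal weight of the highest weight vectors: for $v=e^{\lambda}\otimes w$ with $w\in N^{\Lambda,\lambda}$ a lowest-weight vector for $\omega=\omega_{\mraff}-\omega_{\h}$, one has $L_{\mraff}(0)v=(n_{\Lambda}+n)v$ for some $n\ge 0$, while $L_{\h}(0)$ acts on $M_{\widehat{\h}}(k,\lambda)$ with lowest eigenvalue $\frac{\<\lambda,\lambda\>}{2k}$ on $e^{\lambda}$ (since $\omega_{\h}=\frac{1}{2k}\sum_i u_i(-1)u_i(-1)\1$). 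Hence $L(0)=L_{\mraff}(0)-L_{\h}(0)$ acts on $N^{\Lambda,\lambda}$ with eigenvalues in $n_{\Lambda}-\frac{\<\lambda,\lambda\>}{2k}+\Z_{\ge 0}$, which gives the grading in \eqref{3.1+}. The identification of the bottom piece $N^{\Lambda,\lambda}_{n_{\Lambda}-\<\lambda,\lambda\>/2k}$ with the weight space $L_\g(\Lambda)_{\lambda}$ comes from restricting to the top level $V_{\widehat{\g}}(k,\Lambda)_{n_{\Lambda}}=L_\g(\Lambda)$, on which $h(m)$ acts as $0$ for $m>0$ and as $\lambda(h)$ (via the $\g$-action) for $m=0$ on the $\lambda$-weight space, so that the defining conditions for $N^{\Lambda,\lambda}$ in degree $n_{\Lambda}-\<\lambda,\lambda\>/2k$ cut out exactly $L_\g(\Lambda)_{\lambda}$.

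Next I would address the two assertions about the bottom level as an $A(N(\g,k))$-module: that $L_\g(\Lambda)_{\lambda}$ is an $A(N(\g,k))$-module and that it is irreducible. That it is a module follows from the general Zhu-algebra machinery of \cite{DLM2}, \cite{Z}: $N^{\Lambda,\lambda}$ is an admissible $N(\g,k)$-module (being a submodule of the admissible module $V_{\widehat{\g}}(k,\Lambda)$, with its $L(0)$-grading just computed), so its lowest-weight space carries the structure of an $A(N(\g,k))$-module. Irreducibility is the more delicate point and is where I expect the real content to lie: here I would use that $L_\g(\Lambda)_{\lambda}$ is a weight space of an irreducible $\g$-module and that the operators $o(u)$ for $u$ running over the generators $\omega_{\alpha_i},W^3_{\alpha_i}$ of $N(\g,k)$ (Proposition \ref{generator}) act on the top level through the finite Lie algebra $\g$ — more precisely, for $\alpha\in\Delta_+$ the zero-modes coming from $N(\g^{\alpha},k_{\alpha})\subset N(\g,k)$ act on the $\g^{\alpha}$-weight-string through $\lambda$ inside $L_\g(\Lambda)$, and these strings, taken over all $\alpha$, generate $L_\g(\Lambda)$ irreducibly under $\g$. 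Concretely I would show that any nonzero $A(N(\g,k))$-submodule $U\subseteq L_\g(\Lambda)_\lambda$, being stable under all the $o(u)$, is in particular stable under the zero-modes $x_{\pm\alpha}(0)^{n}x_{\mp\alpha}(0)^{n}$-type operators built inside each $V_{\widehat{\g^{\alpha}}}(k_\alpha,0)(0)$, whence its $\widehat{\g}$-span (equivalently $\g$-span in the top level) is a nonzero submodule of the irreducible $L_\g(\Lambda)$, forcing it to be all of $L_\g(\Lambda)$ and hence $U=L_\g(\Lambda)_\lambda$.

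Finally I would assemble the generation statement: $N^{\Lambda,\lambda}$ is generated as an $N(\g,k)$-module by its bottom level $L_\g(\Lambda)_{\lambda}$. For this I would argue at the level of $V_{\widehat{\g}}(k,\Lambda)$: this module is generated over $\widehat{\g}$ by its top level $L_\g(\Lambda)$, and restricting the generation to the $\lambda$-weight-and-$h(m\ge 0)$-annihilated subspace, together with the spanning argument of Proposition \ref{generator} expressing everything through the subalgebras $V_{\widehat{\g^{\alpha}}}(k_\alpha,0)(0)$ and the Heisenberg $M_{\widehat{\h}}(k)$, shows $V_{\widehat{\g}}(k,\Lambda)(\lambda)=M_{\widehat{\h}}(k)\cdot N(\g,k)\cdot L_\g(\Lambda)_\lambda$; peeling off the Heisenberg tensor factor via the decomposition $V_{\widehat{\g}}(k,\Lambda)(\lambda)=M_{\widehat{\h}}(k,\lambda)\otimes N^{\Lambda,\lambda}$ then yields $N^{\Lambda,\lambda}=N(\g,k)\cdot L_\g(\Lambda)_\lambda$. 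The main obstacle is the irreducibility of the bottom level as an $A(N(\g,k))$-module; everything else is bookkeeping with the conformal gradings and the tensor decomposition, but the irreducibility requires knowing that the zero-mode action of the generators $\omega_{\alpha},W^3_{\alpha}$ (equivalently of $N(\g^{\alpha},k_\alpha)$) recovers enough of the $\g$-action on $L_\g(\Lambda)$ to transport irreducibility from $L_\g(\Lambda)$ down to a single weight space.
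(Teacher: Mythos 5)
Your grading computation and the identification of the bottom level with $L_\g(\Lambda)_\lambda$ are fine, and your generation argument is essentially the paper's (the paper makes it precise by noting that, since $L_\g(\Lambda)$ is an irreducible $\g$-module, any nonzero $w\in L_\g(\Lambda)_\lambda$ generates all of $V_{\wg}(k,\Lambda)$, and then invoking the result of \cite{DM}, \cite{L1} that a module generated by one vector is spanned by the single modes $u_nw$; cutting by $\h$-weight and projecting off the Heisenberg factor gives $N^{\Lambda,\lambda}=\mathrm{span}\{v_nw\,:\,v\in N(\g,k)\}$). The genuine gap is in the irreducibility step, which you yourself flag as the real content. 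First, the operators you propose to use, of the type $x_{\alpha}(0)^nx_{-\alpha}(0)^n$ coming from $V_{\widehat{\g^\alpha}}(k_\alpha,0)(0)$, are not zero modes of elements of $N(\g,k)$: those elements live in $M_{\wh}(k)\otimes N(\g,k)$, not in $N(\g,k)$, so stability of $U$ under $A(N(\g,k))$ does not by itself give stability under them (this can be repaired using $A(M_{\wh}(k)\otimes N(\g,k))\cong A(M_{\wh}(k))\otimes A(N(\g,k))$ and the fact that the commutative factor acts by scalars on $\C e^\lambda$, but you do not do this). Second, and more seriously, your concluding inference is a non sequitur: the $\g$-span (or $\wg$-span) of \emph{any} nonzero subspace of the irreducible module $L_\g(\Lambda)$ is all of $L_\g(\Lambda)$, so knowing that the span of $U$ is everything gives no information about $U$ and certainly does not force $U=L_\g(\Lambda)_\lambda$. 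What you would actually need is that $U$ is stable under (enough of) the image in $\End(L_\g(\Lambda)_\lambda)$ of the zero--weight subalgebra $U(\g)(0)\cap U(\g)$ of $U(\g)$, together with the standard fact that a weight space of an irreducible $\g$-module is irreducible under that subalgebra; and it is neither shown nor clear that the single-root operators you allow generate enough of this image.

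The paper sidesteps this difficulty entirely: from the single-mode spanning of $V_{\wg}(k,\Lambda)$ by $u_nw$ for an arbitrary nonzero $w\in L_\g(\Lambda)_\lambda$, restriction to $\h$-weight $\lambda$ and to the lowest conformal degree shows directly that $L_\g(\Lambda)_\lambda$ is an irreducible module over $A(M_{\wh}(k)\otimes N(\g,k))$; then the isomorphism $A(M_{\wh}(k)\otimes N(\g,k))\cong A(M_{\wh}(k))\otimes_{\C} A(N(\g,k))$ of \cite{DMZ} and the commutativity of $A(M_{\wh}(k))$ yield irreducibility over $A(N(\g,k))$ with no need to identify the image of the Zhu algebra inside the weight space. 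If you want to keep your top-level approach, you must either prove that the zero modes of $N(\g,k)$ (together with the Heisenberg scalars) realize the full image of the zero--weight part of $U(\g)$ on $L_\g(\Lambda)_\lambda$, or replace your argument by the spanning argument above; as written, the irreducibility claim is unproved.
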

\begin{proof} Write $Y(\omega_{\mathfrak h},z)=\oplus_{n\in\Z} L_{\mathfrak h}(n)z^{-n-2}$ and note that $L_{\mathfrak h}(0)$ acts on $L_\g(\Lambda)_\lambda$ as $\frac{\<\lambda,\lambda\>}{2k}.$ The decomposition
(\ref{3.1+}) follows immediately.

Next we prove that $N^{\Lambda,\lambda}$ is generated by $L_\g(\Lambda)_\lambda$ which is an irreducible  $A(N(\g,k))$-module for $\lambda\in P(L_\g(\lambda)).$
We observe that $V_{\widehat{\g}}(k,\Lambda)$ is generated by any nonzero vector in $L_\g(\Lambda)$
from the construction of $V_{\widehat{\g}}(k,\Lambda)$ as $L_\g(\Lambda)$ is an irreducible $\g$-module. It follows from \cite{DM} and \cite{L1} that for any nonzero vector $w\in L_\g(\Lambda)_\lambda,$
$V_{\widehat{\g}}(k,\Lambda)$ is spanned by $u_nw$ for $u\in V_{\widehat{\g}}(k,0)$ and $n\in\Z.$ Since $u_nw\in V_{\widehat{\g}}(k,\Lambda)(\alpha+\lambda)$ for $u\in V_{\widehat{\g}}(k,0)(\alpha)$ for $\alpha\in Q,$ $V_{\widehat{\g}}(k,\Lambda)(\lambda)$ is spanned
by $u_nw$ for $u\in M_{\widehat{\h}}(k)\otimes N(\g,k)$ and $n\in \Z.$ It follows that $N^{\Lambda,\lambda}$ is spanned by $u_nw$ for $u\in N(\g,k)$ and $n\in \Z.$ This implies that  $N^{\Lambda,\lambda}$ is generated by $L_\g(\Lambda)_\lambda$ and $L_\g(\Lambda)_\lambda$ is a simple $A(M_{\widehat{\h}}(k))\otimes N(\g,k))$-module. As $A(M_{\widehat{\h}}(k)\otimes N(\g,k))$ is isomorphic to $A(M_{\widehat{\h}}(k))\otimes_{\C}A(N(\g,k))$ \cite{DMZ} and
$A(M_{\widehat{\h}}(k))$ is commutative, we conclude that $L_\g(\Lambda)_\lambda$ is an irreducible $A(N(\g,k))$-module.
\end{proof}

The universal enveloping algebra $U(\wg)$ is $Q$-graded:
\begin{equation}\label{ugd}
U(\wg)=\bigoplus_{\alpha\in Q}U(\wg)(\alpha)
\end{equation}
where $U(\wg)(\alpha)=\{v\in U(\wg)|[h(0),v]=\alpha(h)v,\ \forall h\in \h\ \}.$ Then $U(\wg)(0)$ is an  associative subalgebra of $U(\wg)$ and  is generated by $K$, $h(m),$ $x_{\alpha_1}(s_1)\cdots x_{\alpha_n}(s_n)$
for $h\in\h, \alpha_i\in\Delta$ with $\sum_i\alpha_i=0$ and $m,s_i\in\Z.$ Note that $U(\wg)=\bigoplus_{n\in\Z}U(\wg)_n$ is also $\Z$-graded such that $\deg x(n)=-n$ for $x\in\g$ and $n\in \Z,$ and $\deg K=0.$ This induces a
$\Z$-gradation on $U(\wg)(0).$ It is clear that for $\Lambda\in P_+^k$ and $\lambda\in P(L_\g(\Lambda)),$ $V_{\widehat{\g}}(k,\Lambda)(\lambda)=M_{\widehat{\h}}(k,\lambda)\otimes N^{\Lambda,\lambda}$ is a $U(\wg)(0)$-module
and $V_{\widehat{\g}}(k,\Lambda)(\lambda)_{n_\Lambda}=L_\g(\Lambda)_{\lambda}$ is an irreducible $U(\wg)(0)_0$-module where $V_{\widehat{\g}}(k,\Lambda)(\lambda)_{n_\Lambda}=V_{\widehat{\g}}(k,\Lambda)(\lambda)\cap V_{\widehat{\g}}(k,\Lambda)_{n_\Lambda}.$
The following Lemma is immediate now.
\begin{lem}\label{l3.2} Let $\Lambda\in P_+^k$ and $\lambda\in P(L_\g(\Lambda)).$ Then $V_{\widehat{\g}}(k,\Lambda)(\lambda)=M_{\widehat{\h}}(k,\lambda)\otimes N^{\Lambda,\lambda}$ is a Verma $U(\wg)(0)$-module generated by $e^{\lambda}\otimes L_\g(\Lambda)_{\lambda}$
in the sense that
$Ie^{\lambda}\otimes L_\g(\Lambda)_{\lambda}=0$ where  $I$ is the left ideal of $U(\wg)(0)$ generated by $h(n), x_{\alpha_1}(s_1)\cdots x_{\alpha_p}(s_p)$ for $h\in \h, \alpha_i\in \Delta$ with $\sum_i\alpha_i=0,$ $n,s_p> 0,$ and
any $U(\wg)(0)$ module $W$ generated by $e^{\lambda}\otimes L_\g(\Lambda)_{\lambda}$ with  $Ie^{\lambda}\otimes L_\g(\Lambda)_{\lambda}=0$ is a quotient of
$V_{\widehat{\g}}(k,\Lambda)(\lambda).$
\end{lem}

We next say few words on certain Verma type modules for vertex operator algebra $N(\g,k).$ Let $V$ be a vertex operator algebra and $A(V)$ be its Zhu algebra. Let $U$ be a simple $A(V)$-module.  Recall from
\cite{DLM2} and \cite{DJ} that the Verma type admissible $V$-module $M(U)=\bigoplus_{n\geq 0}M(U)(n)$ generated by $M(U)(0)=U$ has the property: any admissible $V$-module
$W=\bigoplus_{n\geq 0}W(n)$ generated by $W(0)=U$ is a quotient of $M(U).$
We firmly believe that $N^{\Lambda,\lambda}$ is a Verma type admissible module for $N(\g,k)$ generated by
$L_\g(\Lambda)_\lambda$ for $\Lambda\in P_+^k$ and $\lambda\in P(L_\g(\Lambda)).$ Unfortunately we cannot prove this result in the paper.

\section{Vertex operator algebras $K(\g,k)$}
\setcounter{equation}{0}

We study the parafermion vertex operator algebra $K(\g,k)$ in this section. We get a list of irreducible modules $M^{\Lambda,\lambda}$ from the irreducible $L_{\wg}(k,0)$-modules $L_{\wg}(k,\Lambda)$ for $\Lambda\in P_+^k$ and $\lambda\in \Lambda+Q.$ We also discuss how to use the  lattice vertex operator subalgebra of $L_{\wg}(k,0)$ and the simple currents of $L_{\wg}(k,0)$ to give identifications between different $M^{\Lambda,\lambda}.$

Recall the irreducible quotient $L_{\widehat{\g}}(k,\Lambda)$ of $V_{\widehat{\g}}(k,\Lambda)$ for $\Lambda\in P^k_+.$  Again, the Heisenberg vertex operator algebra $M_{\widehat{\h}}(k)$ generated by
$h(-1)\1$ for $h\in \mathfrak h$ is a simple subalgebra of
$L_{\widehat{\g}}(k,0)$ and $L_{\widehat{\g}}(k,\Lambda)$ is a completely reducible
$M_{\widehat{\h}}(k)$-module. We have a decomposition
\begin{equation}\label{4.1}
L_{\widehat{\g}}(k,\Lambda) = \oplus_{\lambda\in Q+\Lambda} M_{\widehat{\h}}(k,\lambda) \otimes M^{\Lambda,\lambda}
\end{equation}
as modules for $M_{\widehat{\h}}(k)$, where
\begin{equation*}
M^{\Lambda,\lambda} = \{v \in L_{\widehat{\g}}(k,\Lambda)\,|\, h(m)v =\lambda(h)\delta_{m,0}v
\text{ for }\; h\in {\mathfrak h},
 m \ge 0\}.
\end{equation*}
Moreover, $M^{\Lambda,\lambda}=N^{\Lambda,\lambda}/(N^{\Lambda,\lambda}\cap {\cal J}(k,\Lambda)).$
Recall (\ref{3.1+}). We also can write decomposition (\ref{4.1}) as
\begin{equation*}
L_{\widehat{\g}}(k,\Lambda) = \oplus_{\lambda\in Q+\Lambda} L_{\widehat{\g}}(k,\Lambda)(\lambda)
\end{equation*}
where $L_{\widehat{\g}}(k,\Lambda)(\lambda)=M_{\widehat{\h}}(k,\lambda) \otimes M^{\Lambda,\lambda}$
is the weight space of $L_{\widehat{\g}}(k,\Lambda) $ for Lie algebra $\g$ with weight $\lambda.$
Moreover, each $L_{\widehat{\g}}(k,\Lambda)(\lambda)$ is a module for $U(\wg)(0).$

Set $K(\g,k)=M^{0,0}.$ Then $K(\g,k)$ is the commutant of
$M_{\widehat{\h}}(k)$ in $L_{\widehat{\g}}(k,0)$ and is called the parafermion vertex
operator algebra associated to the irreducible highest weight
module $L_{\widehat{\g}}(k,0)$ for $\widehat{\g}.$ The Virasoro vector of
$K(\g,k)$ is given by
$$\omega =\omega_{\mraff} - \omega_{\h},$$
where we still use $\omega_{\mraff}, \omega_{\mathfrak h}$ to
denote their images in $L_{\wg}(k,0)$. Since the Virasoro
vector of $M_{\widehat{\h}}(k)$ is $\omega_{\h}$, we have
$$
 K(\g,k)=\{v \in L_{\wg}(k,0)\,|\, (\omega_{\h})_{0}v =0\}.$$

Note that $K(\g,k)$ is a quotient of $N(\g,k).$ We still denote by $\omega_{\alpha}, W^3_{\alpha}$ for their
images in $K(\g,k).$ It follows from \cite{DW1} that the subalgebra of $K(\g,k)$ generated by $\omega_{\alpha}, W^3_{\alpha}$ is isomorphic to $K(\g^{\alpha},k_{\alpha}).$ So we can regard $K(\g^{\alpha},k_{\alpha})$ as subalgebra of $K(\g,k).$ The following result is an immediate consequence of Proposition \ref{generator}
\begin{prop}\label{generator1}
The vertex operator algebra $K(\g,k)$ is generated by $K(\g^{\alpha_i},k_{\alpha_i})$ or  $\omega_{\alpha_i}$, $W_{\alpha_i}^3$
for $i=1,...,l.$
\end{prop}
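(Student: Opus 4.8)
The plan is to deduce this directly from Proposition \ref{generator} together with the fact, already recorded above, that $K(\g,k)=N(\g,k)/\big(N(\g,k)\cap \J(k,0)\big)$ is a quotient vertex operator algebra of $N(\g,k)$. Write $\pi\colon N(\g,k)\to K(\g,k)$ for the canonical surjection; it is a homomorphism of vertex operator algebras, so it intertwines all the operations $u_n$ and fixes $\1$ and $\omega$. The key (and standard) general fact is that a surjective homomorphism carries a generating set to a generating set: if $S$ generates $N(\g,k)$, then the subalgebra of $K(\g,k)$ generated by $\pi(S)$ contains $\pi$ of the subalgebra generated by $S$, hence contains $\pi(N(\g,k))=K(\g,k)$. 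So it suffices to check that $\pi$ maps the generators furnished by Proposition \ref{generator} to the elements named in the statement.

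Carrying this out: by Proposition \ref{generator}, $N(\g,k)$ is generated by the subalgebras $N(\g^{\alpha_i},k_{\alpha_i})$, $i=1,\dots,l$, equivalently by the vectors $\omega_{\alpha_i}, W^3_{\alpha_i}$. The excerpt records, citing \cite{DW1}, that for each root $\alpha$ the image of $N(\g^{\alpha},k_{\alpha})\subseteq N(\g,k)$ under $\pi$ is the subalgebra of $K(\g,k)$ generated by the images of $\omega_{\alpha}, W^3_{\alpha}$, and that this image is isomorphic to $K(\g^{\alpha},k_{\alpha})$; in particular, under the abuse of notation already in force, $\pi(\omega_{\alpha_i})=\omega_{\alpha_i}$ and $\pi(W^3_{\alpha_i})=W^3_{\alpha_i}$ in $K(\g,k)$. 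Therefore the subalgebra of $K(\g,k)$ generated by $K(\g^{\alpha_1},k_{\alpha_1}),\dots,K(\g^{\alpha_l},k_{\alpha_l})$ — equivalently by $\omega_{\alpha_i}, W^3_{\alpha_i}$ for $i=1,\dots,l$ — contains $\pi(N(\g,k))=K(\g,k)$, and so equals $K(\g,k)$.

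There is essentially no obstacle beyond unwinding definitions. The only point requiring a word of care is the identification $\pi\big(N(\g^{\alpha_i},k_{\alpha_i})\big)=K(\g^{\alpha_i},k_{\alpha_i})$, i.e.\ that $\omega_{\alpha_i},W^3_{\alpha_i}$ generate a copy of $N(sl_2,k_{\alpha_i})$ inside $N(\g,k)$ whose image in $L_{\wg}(k,0)$ is the \emph{simple} parafermion algebra $K(sl_2,k_{\alpha_i})$ rather than a proper quotient — but this is precisely what is quoted from \cite{DW1} just above the statement. Thus the whole argument reduces to the two observations above, which is why the result is labelled an immediate consequence of Proposition \ref{generator}.
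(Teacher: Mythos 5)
Your argument is correct and is exactly the route the paper intends: it observes that $K(\g,k)$ is a quotient of $N(\g,k)$, that the images of $\omega_{\alpha_i},W^3_{\alpha_i}$ generate copies of $K(\g^{\alpha_i},k_{\alpha_i})$ by \cite{DW1}, and then invokes Proposition \ref{generator}, declaring the result an immediate consequence. You have merely spelled out the standard fact that a surjective vertex operator algebra homomorphism sends a generating set to a generating set, which is precisely the unstated step in the paper.
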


 We now turn our attention to the irreducible $K(\g,k)$-modules.
 \begin{lem}\label{l4.1} Let $\Lambda\in P^k_+$ and $\lambda\in \Lambda+Q.$ Then $M^{\Lambda,\lambda}$ is an irreducible $K(\g,k)$-module.
\end{lem}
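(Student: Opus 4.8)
The plan is to deduce the irreducibility of $M^{\Lambda,\lambda}$ as a $K(\g,k)$-module from what we already know: namely that $L_{\widehat{\g}}(k,0)$ is a simple rational vertex operator algebra with known irreducible modules $L_{\widehat{\g}}(k,\Lambda)$ for $\Lambda\in P_+^k$, that the Heisenberg subalgebra $M_{\widehat{\h}}(k)$ is simple, and that we have the isotypical decomposition
\begin{equation*}
L_{\widehat{\g}}(k,\Lambda)=\bigoplus_{\mu\in Q+\Lambda}M_{\widehat{\h}}(k,\mu)\otimes M^{\Lambda,\mu}
\end{equation*}
as a module for $M_{\widehat{\h}}(k)\otimes K(\g,k)$. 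The key structural input is that $M_{\widehat{\h}}(k)\otimes K(\g,k)$ is a rational vertex operator algebra (this is the content established via \cite{CM}, once one includes the long-root lattice factor, but the relevant consequence here is just that $L_{\widehat{\g}}(k,\Lambda)$ is a completely reducible module for it and that its irreducible constituents are tensor products $M_{\widehat{\h}}(k,\mu)\otimes U$ with $U$ irreducible over $K(\g,k)$). The distinct $M_{\widehat{\h}}(k,\mu)$ for $\mu\in Q+\Lambda$ are pairwise non-isomorphic irreducible $M_{\widehat{\h}}(k)$-modules, so the decomposition above is exactly the isotypical decomposition with respect to the Heisenberg action, and the multiplicity space $M^{\Lambda,\mu}$ must therefore be a module for $K(\g,k)$ that is \emph{isotypical} — a direct sum of copies of a single irreducible $K(\g,k)$-module.

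The first step is to reduce to showing that $M^{\Lambda,\lambda}$ is \emph{generated} by a single irreducible $A(K(\g,k))$-module sitting in its bottom degree, together with the fact that it is completely reducible; then an isotypical completely reducible module generated by one irreducible constituent is itself irreducible. For the bottom-degree statement I would use Lemma~\ref{l3.1}: there $N^{\Lambda,\lambda}$ is shown to be generated by the irreducible $A(N(\g,k))$-module $N^{\Lambda,\lambda}_{n_\Lambda-\frac{\<\lambda,\lambda\>}{2k}}=L_\g(\Lambda)_\lambda$, and since $M^{\Lambda,\lambda}=N^{\Lambda,\lambda}/(N^{\Lambda,\lambda}\cap\J(k,\Lambda))$ is a nonzero quotient, it is generated over $K(\g,k)$ by the image of $L_\g(\Lambda)_\lambda$, which (being a quotient of an irreducible $A(N(\g,k))$-module that is nonzero) is an irreducible $A(K(\g,k))$-module. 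The second step is complete reducibility: $M^{\Lambda,\lambda}$ is an admissible $K(\g,k)$-module, and since $K(\g,k)$ is rational (this is one of the paper's main results, but for this lemma one may instead invoke that $M_{\widehat{\h}}(k)\otimes K(\g,k)$ is rational, hence $L_{\widehat{\g}}(k,\Lambda)$ decomposes into irreducibles $M_{\widehat{\h}}(k,\mu)\otimes U_{\mu,j}$, which forces each $M^{\Lambda,\lambda}$ to be a direct sum of irreducible $K(\g,k)$-modules), it decomposes as a direct sum of irreducibles. Combining: $M^{\Lambda,\lambda}$ is a direct sum of irreducible $K(\g,k)$-modules, all isomorphic to one another by the isotypical argument, and it is generated by one copy of that irreducible; hence $M^{\Lambda,\lambda}$ is itself that single irreducible module.

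The main obstacle is the logical dependency on rationality: if this lemma is meant to precede the proof of rationality of $K(\g,k)$ in Section~5, one cannot cite rationality of $K(\g,k)$ directly and must instead route the complete-reducibility argument through the rationality of $M_{\widehat{\h}}(k)\otimes K(\g,k)$ (equivalently, through $V_{\sqrt{k}Q_L}\otimes K(\g,k)$ being an orbifold $L_{\widehat{\g}}(k,0)^G$) plus the classification of the irreducible modules of a tensor product of rational vertex operator algebras, so that each isotypical piece $M^{\Lambda,\lambda}$ is forced to be a finite direct sum of isomorphic irreducible $K(\g,k)$-modules. A secondary, more elementary point to be careful about is verifying that the bottom-degree space of $M^{\Lambda,\lambda}$ is genuinely nonzero — i.e.\ that the image of $L_\g(\Lambda)_\lambda$ under $N^{\Lambda,\lambda}\to M^{\Lambda,\lambda}$ does not vanish — which follows because $L_\g(\Lambda)$ is the top level of $L_{\widehat{\g}}(k,\Lambda)$ and hence survives in the irreducible quotient for every $\lambda\in P(L_\g(\Lambda))=\Lambda+Q$ within the relevant range; here one should note $P(L_\g(\Lambda))$ and $\Lambda+Q$ agree as index sets appearing in the decomposition \eqref{4.1} precisely because weights outside the support contribute zero summands.
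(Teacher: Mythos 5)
There is a genuine gap in the structural input you lean on. The assertion that $M_{\widehat{\h}}(k)\otimes K(\g,k)$ is rational is false: the Heisenberg vertex operator algebra $M_{\widehat{\h}}(k)$ is not rational (it has the infinite family of inequivalent irreducibles $M_{\widehat{\h}}(k,\mu)$, $\mu\in\h^*$), so rationality of this tensor product can never be the source of complete reducibility, and it is not ``equivalent'' to the orbifold statement about $V_{\sqrt{k}Q_L}\otimes K(\g,k)=L_{\wg}(k,0)^G$. A second non sequitur is the claim that, because (\ref{4.1}) is the Heisenberg-isotypical decomposition, each multiplicity space $M^{\Lambda,\mu}$ must be isotypical over $K(\g,k)$: knowing the isotypic decomposition with respect to one tensor factor says nothing about how the multiplicity space decomposes over the commutant --- a priori $M^{\Lambda,\mu}$ could be a sum of pairwise non-isomorphic irreducibles, or fail to be completely reducible at all. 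The only workable version of your route is: deduce rationality of $K(\g,k)$ from the orbifold result for $V_{\sqrt{k}Q_L}\otimes K(\g,k)$ (\cite{CM}, \cite{M}) together with rationality of the lattice factor, conclude that the admissible module $M^{\Lambda,\lambda}$ is a direct sum of irreducibles, and then use your generation argument (Lemma \ref{l3.1} plus injectivity of $N^{\Lambda,\lambda}\to M^{\Lambda,\lambda}$ on the top level) to exclude more than one summand. That can be made rigorous, but it imports the heaviest results of Section 5 into a lemma that is proved in Section 4 by elementary means, and the isotypicality claim should simply be dropped rather than ``combined'' with the generation argument.

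For comparison, the paper's proof needs neither rationality nor Zhu-algebra considerations. Since $M_{\widehat{\h}}(k,\lambda)$ is an irreducible $M_{\widehat{\h}}(k)$-module, it suffices to show that $L_{\wg}(k,\Lambda)(\lambda)=M_{\widehat{\h}}(k,\lambda)\otimes M^{\Lambda,\lambda}$ is irreducible over $L_{\wg}(k,0)(0)=M_{\widehat{\h}}(k)\otimes K(\g,k)$. Because $L_{\wg}(k,\Lambda)$ is an irreducible module for the simple vertex operator algebra $L_{\wg}(k,0)$, the density results of \cite{DM}, \cite{L1}, \cite{LL} give that for any nonzero $w\in L_{\wg}(k,\Lambda)(\lambda)$ the whole module is spanned by the vectors $u_nw$ with $u\in L_{\wg}(k,0)$, $n\in\Z$; since $u_nw\in L_{\wg}(k,\Lambda)(\lambda+\alpha)$ whenever $u\in L_{\wg}(k,0)(\alpha)$, comparing $\h$-weights shows $L_{\wg}(k,\Lambda)(\lambda)$ is already spanned by $u_nw$ with $u\in L_{\wg}(k,0)(0)$, i.e.\ it is irreducible over $M_{\widehat{\h}}(k)\otimes K(\g,k)$, and hence $M^{\Lambda,\lambda}$ is irreducible over $K(\g,k)$. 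You should replace the rationality-based machinery by this direct argument.
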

\begin{proof} Since $M_{\wh}(k)\otimes K(\g,k)=L_{\wg}(k,0)(0)$ and $M_{\wh}(k,\lambda)$ is an irreducible
$M_{\wh}(k)$-module, it is good enough to show that $M_{\wh}(k,\lambda)\otimes M^{\Lambda,\lambda}=L_{\wg}(k,\Lambda)(\lambda)$ is
an irreducible $L_{\wg}(k,0)(0)$-module. Note that $L_{\wg}(k,\Lambda)$ is an irreducible $L_{\wg}(k,0)$-module. It follows from \cite{DM}, \cite{LL}, \cite{L1}  that for any nonzero $w\in L_{\wg}(k,\Lambda)(\lambda),$
$L_{\wg}(k,\Lambda)$ is spanned by $u_nw$ for $u\in L_{\wg}(k,0)(\alpha)$ with $\alpha\in Q,$ and $n\in\Z.$
Clearly,  $u_nw\in L_{\wg}(k,\Lambda)(\lambda+\alpha).$ This implies that $L_{\wg}(k,\Lambda)(\lambda)$ is spanned
by $u_nw$ for $u\in L_{\wg}(k,0)(0)$ and $n\in\Z.$ That is, $L_{\wg}(k,\Lambda)(\lambda)$ is
an irreducible $L_{\wg}(k,0)(0)$-module.
\end{proof}

But not all these irreducible modules $M^{\Lambda,\lambda}$ are different. We give some identifications of these modules using lattice vertex operator algebras of $L_{\wg}(k,0)$ and simple currents of $L_{\wg}(k,0).$

Recall the root lattice $Q$ of $\g.$ Let $Q_L$ be the sublattice of $Q$ spanned by the long roots.
Then $Q$ and $Q_L$ have the same rank. It is known from \cite{DW3}, \cite{K} that the lattice
vertex operator algebra $V_{\sqrt{k}Q_L}$ (see \cite{B} and \cite{FLM}) is a subalgebra of $L_{\widehat{\g}}(k,0).$
Moreover, $M_{\widehat{\h}}(k)$ is a subalgebra of $V_{\sqrt{k}Q_L}$ with the same Virasoro element. In fact,
$$V_{\sqrt{k}Q_L}=M_{\widehat{\h}}(k)\otimes \C[\sqrt{k}Q_L]=\oplus_{\alpha\in Q_L}M_{\widehat{\h}}(k,\sqrt{k}\alpha)$$
as a module for $M_{\widehat{\h}}(k)$ (see (\ref{4.1})). It is important to point out that in the definition of vertex operator algebra $M_{\widehat{\h}}(k)$ we use
the bilinear form $\<,\>.$ If we use the standard notation for the lattice vertex operator algebra $V_{\sqrt{k}Q_L}=M(1)\otimes \C[\sqrt{k}Q_L]$
we have to use another bilinear form  $(,)=k\<,\>.$ The actions of $h(0)$ on $V_{\sqrt{k}Q_L}$-modules
for  $h\in{\mathfrak h}$ also uses   $(,)$ instead of $\<,\>.$

\begin{rem} The parafermion vertex operator algebra $K(\g,k)$ is also the commutant of the rational vertex operator
algebra $V_{\sqrt{k}Q_L}$ \cite{D}, \cite{DLM1} in rational vertex operator algebra $L_{\wg}(k,0).$ This explains why one expects
that $K(\g,k)$ is rational.
\end{rem}

Let $L$ be an even lattice. As usual, we denote the dual lattice of $L$ by $L^{\circ}.$ It is known from \cite{D} that the irreducible $V_L$-modules are given by $V_{L+\lambda}$ where $\lambda\in L^{\circ}.$ Moreover,  $V_{L+\lambda}= V_{L+\mu}$ if $\lambda-\mu\in L.$ Consider the coset decomposition $Q=\cup_{i\in Q/kQ_L}(kQ_L+\beta_i).$ Since $V_L$ is rational for any positive definite even lattice  $L$ (see \cite{D}, \cite{DLM1}),
we have the decomposition
\begin{equation}\label{4.2}
L_{\widehat{\g}}(k,\Lambda)=\bigoplus_{i\in Q/kQ_L}V_{\sqrt{k}Q_L+\frac{1}{\sqrt{k}}(\Lambda+\beta_i)}\otimes M^{\Lambda,\Lambda+\beta_i}
\end{equation}
as modules for $V_{\sqrt{k}Q_L}\otimes K(\g,k)$ where $M^{\Lambda,\lambda}$ is as before. Again, as a $M_{\widehat{\h}}(k)$-module
\begin{equation}\label{4.3}
V_{\sqrt{k}Q_L+\frac{1}{\sqrt{k}}(\Lambda+\alpha)}=\oplus_{\beta\in Q_L}M_{\widehat{\h}}(k,k\beta+\Lambda+\alpha).
\end{equation}
That is, for $h\in {\mathfrak h},$ $h(0)$ acts on  $V_{\sqrt{k}Q_L}\otimes K(\g,k)$-module $V_{\sqrt{k}Q_L+\frac{1}{\sqrt{k}}\lambda}\otimes M^{\Lambda,\lambda} $ is given by
$$h(0)(u\otimes e^{\sqrt{k}\beta+\frac{1}{\sqrt{k}}\lambda})\otimes w=\<h,k\beta+\lambda\>(u\otimes e^{\sqrt{k}\beta+\frac{1}{\sqrt{k}}\lambda})\otimes w$$
for $\beta\in Q_L,$ $\lambda\in \Lambda+Q,$ $u\in M_{\widehat{\h}}(k)$ and $w\in M^{\Lambda,\lambda}.$

Here is our first identification among $M^{\Lambda,\lambda}.$
In the case $\g=sl_2,$ this result has been obtained in \cite{DLY}.
\begin{prop}\label{l4.1'} Let $\Lambda\in P^k_+$ and $\lambda\in \Lambda+Q.$ Then
$M^{\Lambda, \lambda+k\beta}$ and $M^{\Lambda, \lambda}$ are isomorphic for any $\beta\in Q_L.$
\end{prop}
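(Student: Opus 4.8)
The plan is to realize $M^{\Lambda,\lambda}$ and $M^{\Lambda,\lambda+k\beta}$ as the multiplicity space of one and the \emph{same} irreducible $V_{\sqrt{k}Q_L}$-module inside $L_{\widehat{\g}}(k,\Lambda)$, using only the rationality of the lattice vertex operator algebra $V_{\sqrt{k}Q_L}$ (and \emph{not} rationality of $K(\g,k)$, which is not yet available at this point). Set $c=\lambda+kQ_L\subseteq\Lambda+Q$ and consider
$$L^{(c)}:=\bigoplus_{\mu\in c}M_{\widehat{\h}}(k,\mu)\otimes M^{\Lambda,\mu}\subseteq L_{\widehat{\g}}(k,\Lambda).$$
First I would check that $L^{(c)}$ is a $V_{\sqrt{k}Q_L}\otimes K(\g,k)$-submodule: by (\ref{4.3}) the generators of $V_{\sqrt{k}Q_L}$ shift the $\widehat{\h}$-weight by elements of $kQ_L$ and hence preserve $c$, while $K(\g,k)\subseteq L_{\widehat{\g}}(k,0)(0)$ preserves every $\widehat{\h}$-weight space.

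Next, exactly as in the derivation of (\ref{4.2}), $L_{\widehat{\g}}(k,\Lambda)$ and hence $L^{(c)}$ is a completely reducible $V_{\sqrt{k}Q_L}$-module. Because every $\widehat{\h}$-weight occurring in $L^{(c)}$ lies in $c$, and because the irreducible $V_{\sqrt{k}Q_L}$-modules are classified by cosets of the dual lattice of $\sqrt{k}Q_L$ with (\ref{4.3}) identifying $V_{\sqrt{k}Q_L+\frac{1}{\sqrt{k}}\lambda}$ as precisely the sum of the Heisenberg modules with weights in $c$, every irreducible constituent of $L^{(c)}$ is isomorphic to the single module $V_{\sqrt{k}Q_L+\frac{1}{\sqrt{k}}\lambda}$ (here one checks $\frac{1}{\sqrt{k}}\lambda$ lies in that dual lattice, since $\langle\lambda,\gamma\rangle\in\Z$ for $\lambda\in\Lambda+Q$ and $\gamma\in Q_L$). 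Hence $L^{(c)}\cong V_{\sqrt{k}Q_L+\frac{1}{\sqrt{k}}\lambda}\otimes W$ as $V_{\sqrt{k}Q_L}$-modules for a multiplicity space $W$; since the $K(\g,k)$-action commutes with the $V_{\sqrt{k}Q_L}$-action, $W$ carries a natural $K(\g,k)$-module structure and this becomes an isomorphism of $V_{\sqrt{k}Q_L}\otimes K(\g,k)$-modules with $K(\g,k)$ acting on the factor $W$.

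Finally I would compare $\widehat{\h}$-weight spaces. For any $\mu\in c$ the weight-$\mu$ subspace of $L^{(c)}$ is $M_{\widehat{\h}}(k,\mu)\otimes M^{\Lambda,\mu}$ with $K(\g,k)$ acting through the second factor (this is the $M_{\widehat{\h}}(k)\otimes K(\g,k)$-module structure of $L_{\widehat{\g}}(k,\Lambda)(\mu)$ coming from (\ref{4.1})), whereas on the other side it is $M_{\widehat{\h}}(k,\mu)\otimes W$; irreducibility of $M_{\widehat{\h}}(k,\mu)$ over $M_{\widehat{\h}}(k)$ then forces $M^{\Lambda,\mu}\cong W$ as $K(\g,k)$-modules. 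Since $\mu\in c$ was arbitrary we get $M^{\Lambda,\mu}\cong M^{\Lambda,\mu'}$ for all $\mu,\mu'\in c$, and the choice $\mu=\lambda$, $\mu'=\lambda+k\beta$ yields the Proposition. (Conceptually this is just the statement that the multiplicity spaces appearing in (\ref{4.2}) are, up to isomorphism, independent of the chosen coset representatives.)

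I expect the only real obstacle to be bookkeeping rather than any genuine difficulty: one must keep straight the two bilinear forms $\langle\,,\,\rangle$ and $(\,,\,)=k\langle\,,\,\rangle$ together with the attendant conventions for the $h(0)$-action on lattice modules, so that (\ref{4.3}) and the identification of $V_{\sqrt{k}Q_L+\frac{1}{\sqrt{k}}\lambda}$ are applied correctly. The substantive inputs are all already available, namely the rationality of $V_{\sqrt{k}Q_L}$ and the classification of its irreducible modules, plus the standard multiplicity-space decomposition of a module under a commuting pair of vertex operator subalgebras.
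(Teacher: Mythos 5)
Your proposal is correct and is essentially the paper's own argument: the paper's proof simply says the result "follows from the decompositions (\ref{4.1})--(\ref{4.3})", i.e.\ from comparing the $M_{\widehat{\h}}(k)\otimes K(\g,k)$-decomposition with the coarser $V_{\sqrt{k}Q_L}\otimes K(\g,k)$-decomposition in which one multiplicity space $M^{\Lambda,\Lambda+\beta_i}$ is attached to an entire coset of $kQ_L$. Your write-up just supplies the details (complete reducibility over the rational lattice algebra, the dual-lattice check, and the weight-space comparison) behind that one-line citation.
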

\begin{proof} (1) follows from the decompositions (\ref{4.1})-(\ref{4.3}).
\end{proof}

We next investigate  more connection between different $M^{\Lambda,\lambda}$ and $M^{\Lambda',\lambda'}.$ For this purpose, we need to discuss the simple currents for the vertex operator algebra $L_{\widehat{\g}}(k,0)$ following \cite{L4}.

Let $\Lambda_1,...,\Lambda_l$ be the fundamental weights of $\g.$ Then $P=\oplus_{i=1}^l\Z\Lambda_i$ is the weight lattice. Let $\theta=\sum_{i=1}^la_i\alpha_i.$  Here is a list of $a_i=1$ using the labeling from \cite{K}:
\begin{eqnarray*}
A_l: & a_1,...,a_l\\
B_l: & a_1\\
C_l: &a_l\\
D_l: &a_1, a_{l-1}, a_l\\
E_6: &a_1, a_5\\
E_7: &a_6
\end{eqnarray*}
There are  $|P/Q|-1$ such $i$ with  $a_i=1$ \cite{L4}.

It is proved in \cite{L2'} and \cite{L4} that $L_{\wg}(k,k\Lambda_i)$ are simple current if $a_i=1.$ To see this
we let $h^i\in \h$ for $i=1,...,l$ defined by $\alpha_i(h^j)=\delta_{i,j}$ for $j=1,...,l.$ For any $h\in \h$
set
$$\Delta(h,z)=z^{h(0)}\exp\left(\sum_{n=1}^{\infty}\frac{h(n)(-z)^{-n}}{-n}\right).$$
 The following result was obtained in \cite{L2'} and \cite{L4}.
\begin{thm} Assume $a_i=1.$

(1) For any $\Lambda\in P_+^k$, $L_{\wg}(k,\Lambda)^{(h^i)}=(L_{\wg}(k,\Lambda)^{(h^i)},Y_i)$ is an irreducible
$L_{\wg}(k,0)$-module where $L_{\wg}(k,\Lambda)^{(h^i)}=L_{\wg}(k,\Lambda)$ as vector spaces
and $Y_i(u,z)=Y(\Delta(h^i,z)u,z)$ for $u\in L_{\wg}(k,0).$
 Let $\Lambda^{(i)}\in P_+^k$ such that $L_{\wg}(k,\Lambda)^{(h^i)}$
is isomorphic to $L_{\wg}(k,\Lambda^{(i)}).$

(2) The $L_{\wg}(k,0^{(i)})=L_{\wg}(k,k\Lambda_i)$ is a simple current and
$L_{\wg}(k,k\Lambda_i)\boxtimes L_{\wg}(k,\Lambda)=L_{\wg}(k,\Lambda^{(i)})$ for all $\Lambda\in P_+^k.$
\end{thm}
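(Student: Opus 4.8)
The plan is to prove both parts by realizing the twist $u \mapsto \Delta(h^i,z)u$ concretely and checking that it does exactly what the general $\Delta$-operator formalism of Li \cite{L2'}, \cite{L4} predicts. First I would recall the two defining properties of $\Delta(h,z)$: for a vertex operator algebra $V$ and a weak $V$-module $W$, the pair $(W, Y(\Delta(h,z)\cdot,z))$ is again a weak $V$-module whenever $h(0)$ acts semisimply with suitable eigenvalues and $h(n)$ for $n\ge 1$ act nilpotently — and, crucially, $\Delta(h,z)$ satisfies the cocycle-type identity $\Delta(h,z_1+z_2) = \Delta(h,z_1)\Delta(h,z_2)$ up to the relevant normalization, together with $Y(\Delta(h,z_0)u,z)\Delta(h,z) = \Delta(h,z_0+z)Y(u,z)$. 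These identities are precisely what make the twisted action a module action, and they are stated in the references we are allowed to cite. Since $h^i$ lies in the Cartan subalgebra, $(h^i)(0)$ acts on any $L_{\wg}(k,\Lambda)$ by the weight $\langle h^i,\mu\rangle = a_i^\vee$-type integers on the weight-$\mu$ space (here using $\alpha_j(h^i)=\delta_{ij}$, so the eigenvalues are the $\alpha_i$-coordinates of weights, hence integers on $\Lambda+Q$), and $(h^i)(n)$ for $n\ge 1$ act locally nilpotently on the lowest-weight-generated module; so the hypotheses of Li's theorem hold and part (1) — that $L_{\wg}(k,\Lambda)^{(h^i)}$ is an irreducible $L_{\wg}(k,0)$-module — follows. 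Irreducibility is preserved because the map $W \mapsto W^{(h^i)}$ is an equivalence on the category of weak modules with inverse $W \mapsto W^{(-h^i)}$. Since $L_{\wg}(k,0)$ is rational with the irreducibles classified by $P_+^k$, there is a unique $\Lambda^{(i)}\in P_+^k$ with $L_{\wg}(k,\Lambda)^{(h^i)}\cong L_{\wg}(k,\Lambda^{(i)})$.

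For part (2), I would first identify $0^{(i)}$, i.e. compute the lowest conformal weight and the $\g$-top of $L_{\wg}(k,0)^{(h^i)}$. The twist by $\Delta(h^i,z)$ shifts the $L(0)$-grading by $\frac{k}{2}\langle h^i,h^i\rangle$ (the $z^{h^i(0)}$ factor contributes $\langle h^i,\text{wt}\rangle$ and the conformal vector picks up a $\frac12 h^i(0)$-type correction), and it shifts $\h$-weights by $k h^i$ regarded in $\h^*$; since $a_i = 1$, the weight $k\Lambda_i$ is the unique element of $P_+^k$ in the coset $kh^i + Q$ with the minimal such conformal weight, which forces $L_{\wg}(k,0)^{(h^i)}\cong L_{\wg}(k,k\Lambda_i)$. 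That $L_{\wg}(k,k\Lambda_i)$ is a simple current, and that $L_{\wg}(k,k\Lambda_i)\boxtimes L_{\wg}(k,\Lambda) = L_{\wg}(k,\Lambda^{(i)})$, is then the statement that the invertible tensor-category object produced by a $\Delta$-twist is exactly $L_{\wg}(k,0)^{(h^i)}$ and that twisting realizes the corresponding fusion — again this is the content of \cite{L2'} and \cite{L4}, so here I would simply invoke those results rather than reprove the fusion-rule computation. The identification $L_{\wg}(k,\Lambda)^{(h^i)}\boxtimes - = -^{(h^i)}$ at the level of the module category gives the displayed fusion product with $\Lambda^{(i)}$ as in part (1), and invertibility (simple-current property) follows since the twist is invertible.

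The main obstacle I expect is the bookkeeping in part (2): pinning down $\Lambda^{(i)}$ explicitly as $k\Lambda_i$ when $\Lambda = 0$, i.e. correctly tracking how $\Delta(h^i,z)$ acts on conformal weights and $\h$-weights and then matching the result against the list of $\Lambda\in P_+^k$. The condition $a_i=1$ is exactly what guarantees $k\Lambda_i \in P_+^k$ (since $\langle k\Lambda_i,\theta\rangle = k a_i = k$) and that the affine Weyl-group/outer-automorphism symmetry permutes the $P_+^k$-labels in the way the simple-current structure demands; getting the normalization of the bilinear form right — the paper uses $\langle,\rangle$ with long roots of square length $2$, while the lattice side uses $(,) = k\langle,\rangle$ — is the one place where sign/scaling errors creep in, so I would set up those conventions carefully before doing the weight computation. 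Everything else is a direct application of the cited machinery together with the rationality of $L_{\wg}(k,0)$.
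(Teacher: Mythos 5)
Your proposal is correct and follows essentially the same route as the paper: the paper gives no independent proof of this theorem but quotes it directly from Li's $\Delta(h,z)$-operator results in \cite{L2'} and \cite{L4}, and your argument amounts to verifying Li's hypotheses (integrality of $h^i(0)$ on $L_{\wg}(k,0)$, invertibility of the twist, rationality pinning down $\Lambda^{(i)}$) and then invoking those same references for the identification $0^{(i)}=k\Lambda_i$, the simple-current property, and the fusion rule. The only small slip — the eigenvalues of $h^i(0)$ on $L_{\wg}(k,\Lambda)$ lie in $\Lambda(h^i)+\Z$ rather than $\Z$, but integrality is only needed on the algebra itself — does not affect the argument.
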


Although we do not need to know $\Lambda^{(i)}$ explicitly in this paper, it is still an interesting problem
to find out. In the case $\g=sl_2,$ assume $\Delta=\{\pm \alpha\}.$ Then
$P_+^k=\{\frac{s\alpha}{2}|s=0,...,k\}$ and $(\frac{s\alpha}{2})^{(1)}=
\frac{(k-s)\alpha}{2}$ where $1$ corresponds to $h^1.$

Recall decomposition (\ref{4.1}).  We now investigate how $\h$ acts on each weight space $L_{\wg}(k,\Lambda)(\lambda)$
regarding as a subspace of $L_{\wg}(k,\Lambda)^{(h^i)}.$ This result will be helpful
in the identification of irreducible $K(\g,k)$-modules later.

Note that for $h\in\h,$  $\Delta(h^i,z)h(-1)\1=h(-1)\1+k\<h^i,h\>z^{-1}.$ Thus
$$Y_i(h(-1)\1,z)=Y(h(-1)\1,z)+k\<h^i,h\>z^{-1}.$$
In particular, the $h(0)$ acts  $(L_{\wg}(k,\Lambda), Y_i)$
as $h(0)+\<h^i,h\>k$ and acts on $L_{\wg}(k,\Lambda)(\lambda)\subset L_{\wg}(k,\Lambda)^{(h^i)},$
as $\lambda(h)+\<h^i,h\>k$ for $\lambda\in \Lambda+Q.$ Recall the identification between $\h$ and $\h^*.$
We see that $h^i=\frac{2t_{\Lambda_i}}{\<\alpha_i,\alpha_i\>}.$ Note from \cite{K} that if $a_i=1$ then $\alpha_i$ is a long root. So $h^i=t_{\Lambda_i}$ with $a_i=1.$
This implies that
$\<h^i,h\>=\Lambda_i(h)$ and $$\lambda(h)+\<h^i,h\>k=\lambda(h)+k\Lambda_i(h).$$
Thus we have proved the following result:
\begin{lem}\label{iden1} There is an $L_{\wg}(k,0)$-module isomorphism
$$f_{\Lambda,i}: L_{\wg}(k,\Lambda)^{(h^i)} \to L_{\wg}(k,\Lambda^{(i)})$$
such that
$f_{\Lambda,i}(L_{\wg}(k,\Lambda)(\lambda))=L_{\wg}(k,\Lambda^{(i)})(\lambda+k\Lambda_i)$
for all $\lambda\in \Lambda+Q.$
\end{lem}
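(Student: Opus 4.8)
The plan is to assemble the isomorphism directly from the simple-current construction of the preceding Theorem together with the explicit $\Delta$-operator computation carried out just above the statement. By part (1) of that Theorem, $L_{\wg}(k,\Lambda)^{(h^i)}$ is an irreducible $L_{\wg}(k,0)$-module, and by definition of $\Lambda^{(i)}$ it is isomorphic to $L_{\wg}(k,\Lambda^{(i)})$; so there exists an $L_{\wg}(k,0)$-module isomorphism $f_{\Lambda,i}\colon L_{\wg}(k,\Lambda)^{(h^i)}\to L_{\wg}(k,\Lambda^{(i)})$. The only thing to verify is that $f_{\Lambda,i}$ carries the weight space $L_{\wg}(k,\Lambda)(\lambda)$ onto the weight space $L_{\wg}(k,\Lambda^{(i)})(\lambda+k\Lambda_i)$.

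First I would pin down how $\h$ acts on $L_{\wg}(k,\Lambda)^{(h^i)}$. From $\Delta(h^i,z)h(-1)\1=h(-1)\1+k\<h^i,h\>z^{-1}$ we get $Y_i(h(-1)\1,z)=Y(h(-1)\1,z)+k\<h^i,h\>z^{-1}$, hence the operator that plays the role of $h(0)$ in $(L_{\wg}(k,\Lambda),Y_i)$ is the old $h(0)$ shifted by the scalar $k\<h^i,h\>$. Therefore a vector lying in $L_{\wg}(k,\Lambda)(\lambda)$ (i.e. an $h(0)$-eigenvector of eigenvalue $\lambda(h)$ for all $h$) becomes, inside $L_{\wg}(k,\Lambda)^{(h^i)}$, an eigenvector of eigenvalue $\lambda(h)+k\<h^i,h\>$. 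Next I would use the hypothesis $a_i=1$: by \cite{K} this forces $\alpha_i$ to be a long root, so $\<\alpha_i,\alpha_i\>=2$ and hence $h^i=\frac{2t_{\Lambda_i}}{\<\alpha_i,\alpha_i\>}=t_{\Lambda_i}$. Under the identification of $\h$ with $\h^*$ this gives $\<h^i,h\>=\Lambda_i(h)$, so the shifted eigenvalue is exactly $(\lambda+k\Lambda_i)(h)$.

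Combining the two observations: the weight space $L_{\wg}(k,\Lambda)(\lambda)$, viewed inside $L_{\wg}(k,\Lambda)^{(h^i)}$, is precisely the $\h$-weight space of weight $\lambda+k\Lambda_i$. Since $f_{\Lambda,i}$ is a homomorphism of $L_{\wg}(k,0)$-modules and the vertex operators $Y(h(-1)\1,z)$ (in particular the components $h(0)$) are determined by the $L_{\wg}(k,0)$-action, $f_{\Lambda,i}$ intertwines the $\h$-actions and therefore sends $\h$-weight spaces to $\h$-weight spaces of the same weight. Hence $f_{\Lambda,i}\big(L_{\wg}(k,\Lambda)(\lambda)\big)=L_{\wg}(k,\Lambda^{(i)})(\lambda+k\Lambda_i)$ for every $\lambda\in\Lambda+Q$, which is the claim. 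There is no serious obstacle here; the one point that deserves care is the bookkeeping of which bilinear form and which normalization of $h^i$ is in force — making sure that the passage $\<h^i,h\>=\Lambda_i(h)$ is the correct one given that $a_i=1$ guarantees $\alpha_i$ long — and this is exactly the step I would present in full detail.
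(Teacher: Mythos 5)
Your proposal is correct and follows essentially the same route as the paper: existence of $f_{\Lambda,i}$ from the simple-current theorem, then the computation $Y_i(h(-1)\1,z)=Y(h(-1)\1,z)+k\<h^i,h\>z^{-1}$ together with the observation that $a_i=1$ makes $\alpha_i$ long, so $h^i=t_{\Lambda_i}$ and the $h(0)$-eigenvalue shifts by $k\Lambda_i(h)$, identifying the weight spaces. The only addition is your explicit remark that $f_{\Lambda,i}$ intertwines the $\h$-action, which the paper leaves implicit.
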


It is easy to see that $\Delta(h^i,z)u=u$ for
$u\in K(\g,k).$ Thus $Y_i(u,z)=Y(u,z)$ for $u\in K(\g,k)$ and $f_{\Lambda,i}:L_{\wg}(k,\Lambda)\to L_{\wg}(k,\Lambda^{(i)})$ is a $K(\g,k)$-module isomorphism. This gives us the second identification.
\begin{thm}\label{iden2} We have a $K(\g,k)$-module isomorphism between $M^{\Lambda, \lambda}$ and
$M^{\Lambda^{(i)},\lambda+k\Lambda_i}$ for any $\lambda\in \Lambda+Q.$
Moreover, $\Lambda_i$ does not lie in $Q_L.$ That is, this identification
is different from the identification
given in Propsition \ref{l4.1'}.
\end{thm}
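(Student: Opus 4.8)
The plan is to deduce the first sentence directly from Lemma~\ref{iden1} together with the observation immediately preceding the statement, and then to verify the last assertion by a short lattice computation. First I would note that the map $f_{\Lambda,i}$ produced in Lemma~\ref{iden1} is, by construction, an isomorphism of $L_{\wg}(k,0)$-modules, and that on the subalgebra $K(\g,k)$ the vertex operators agree: since $\Delta(h^i,z)$ acts as the identity on $K(\g,k)$ (because $h(n)u=0$ for $h\in\h$, $n\geq 0$, $u\in K(\g,k)$, and because $h^i(0)$ acts as $0$ on $K(\g,k)=M^{0,0}$), we have $Y_i(u,z)=Y(\Delta(h^i,z)u,z)=Y(u,z)$ for $u\in K(\g,k)$. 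Hence $f_{\Lambda,i}$ is in particular a $K(\g,k)$-module isomorphism $L_{\wg}(k,\Lambda)\to L_{\wg}(k,\Lambda^{(i)})$. Restricting the intertwining identity $f_{\Lambda,i}(L_{\wg}(k,\Lambda)(\lambda))=L_{\wg}(k,\Lambda^{(i)})(\lambda+k\Lambda_i)$ to the weight space for $\h$ and recalling the decomposition (\ref{4.1}), $L_{\wg}(k,\Lambda)(\lambda)=M_{\wh}(k,\lambda)\otimes M^{\Lambda,\lambda}$, we see that $f_{\Lambda,i}$ carries $M_{\wh}(k,\lambda)\otimes M^{\Lambda,\lambda}$ isomorphically onto $M_{\wh}(k,\lambda+k\Lambda_i)\otimes M^{\Lambda^{(i)},\lambda+k\Lambda_i}$ as $K(\g,k)$-modules. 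Since $K(\g,k)$ acts trivially on the Heisenberg tensor factor and $M_{\wh}(k,\mu)$ is irreducible for every $\mu$, this forces a $K(\g,k)$-module isomorphism $M^{\Lambda,\lambda}\cong M^{\Lambda^{(i)},\lambda+k\Lambda_i}$, which is the first claim.

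For the final assertion I would argue that $\Lambda_i\notin Q_L$ whenever $a_i=1$. The point is that for such $i$ the weight $\Lambda_i$ is not even in the root lattice $Q$: indeed, the list of indices with $a_i=1$ has cardinality $|P/Q|-1$, and these are exactly representatives of the nontrivial cosets of $Q$ in $P$ that happen to be fundamental weights (this is the content of the cited results of \cite{L4}); in particular each such $\Lambda_i$ represents a nonzero class in $P/Q$, so $\Lambda_i\notin Q$. Since $Q_L\subseteq Q$, a fortiori $\Lambda_i\notin Q_L$. Consequently the shift $\lambda\mapsto\lambda+k\Lambda_i$ appearing here is not of the form $\lambda\mapsto\lambda+k\beta$ with $\beta\in Q_L$, so the identification of Theorem~\ref{iden2} is genuinely distinct from the one in Proposition~\ref{l4.1'}.

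The only real subtlety — the step I expect to need the most care — is the descent from the $K(\g,k)\otimes M_{\wh}(k)$-module isomorphism to a bare $K(\g,k)$-module isomorphism of the $M^{\Lambda,\lambda}$ factors. One must be slightly careful that the isomorphism $f_{\Lambda,i}$ really respects the tensor product decomposition rather than merely the total space; but this is immediate once one observes that $f_{\Lambda,i}$ intertwines the $\h$-action up to the fixed shift $k\Lambda_i$ (that is exactly what Lemma~\ref{iden1} records) and therefore sends the $\lambda$-weight space to the $(\lambda+k\Lambda_i)$-weight space, and that within each weight space the Heisenberg factor is determined (as an $M_{\wh}(k)$-module) while $M^{\Lambda,\lambda}$ is recovered as the multiplicity space, equivalently as $\{v\in L_{\wg}(k,\Lambda)(\lambda)\mid h(m)v=\lambda(h)\delta_{m,0}v,\ h\in\h,\ m\geq 0\}$, which $f_{\Lambda,i}$ matches with the corresponding space for $L_{\wg}(k,\Lambda^{(i)})(\lambda+k\Lambda_i)$. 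The remainder is bookkeeping.
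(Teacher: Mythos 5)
Your first paragraph is fine and is essentially the paper's own argument: $\Delta(h^i,z)$ fixes $K(\g,k)$, so $f_{\Lambda,i}$ from Lemma \ref{iden1} is a $K(\g,k)$-module map sending the weight space $M_{\wh}(k,\lambda)\otimes M^{\Lambda,\lambda}$ onto $M_{\wh}(k,\lambda+k\Lambda_i)\otimes M^{\Lambda^{(i)},\lambda+k\Lambda_i}$, and the multiplicity spaces are identified.

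The second half, however, has a genuine gap. You claim that whenever $a_i=1$ the weight $\Lambda_i$ is not even in the root lattice $Q$, the indices with $a_i=1$ being ``exactly representatives of the nontrivial cosets of $Q$ in $P$.'' That statement is true only in the simply-laced cases $A$, $D$, $E$; it fails precisely in the two non-simply-laced cases the theorem must also cover. For $B_l$ the only index with $a_i=1$ is $i=1$, and $\Lambda_1=\epsilon_1$ lies in $Q=\Z\epsilon_1+\cdots+\Z\epsilon_l$ (the nontrivial coset of $Q$ in $P$ is represented by the spin weight $\Lambda_l$, which has $a_l=2$); for $C_l$ with $l$ even, $\Lambda_l=\frac{1}{\sqrt2}(\epsilon_1+\cdots+\epsilon_l)$ also lies in $Q$. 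So the inference ``$\Lambda_i\notin Q$, hence $\Lambda_i\notin Q_L$'' collapses exactly where it is needed, and your counting argument ($|P/Q|-1$ indices with $a_i=1$) does not rescue it, since the count does not tell you which cosets the $\Lambda_i$ represent. The conclusion $\Lambda_i\notin Q_L$ is still true, but it must be checked against $Q_L$ directly rather than against $Q$: this is what the paper does, using the $A,D,E$ coset description only in the simply-laced case and then verifying by explicit coordinates that for $B_l$ the lattice $Q_L$ is the set of integer vectors with even coordinate sum (so $\epsilon_1\notin Q_L$), and for $C_l$ one has $Q_L=\sum_i\Z\sqrt2\,\epsilon_i$ (so $\frac{1}{\sqrt2}(\epsilon_1+\cdots+\epsilon_l)\notin Q_L$). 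Your write-up needs this case-by-case verification for $B_l$ and $C_l$ to be complete.
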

\begin{proof} The identification between  $M^{\Lambda, \lambda}$ and
$M^{\Lambda^{(i)},\lambda+k\Lambda_i}$
for  $\lambda\in \Lambda+Q$ is an immediate consequence of Lemma \ref{iden1}.

To prove the identification here is different from that given in Proposition \ref{l4.1'}, it is sufficient
to show that $\Lambda_i$ does not lie in $Q_L.$ This is clear if $\g$ is of $A,D,E$ type
as $P=Q\cup \cup_{i,a_i=1}(Q+\Lambda_i).$ So we only need to deal with type $B_l$ and  $C_l.$

For type $B_l,$  let $\E=\R^l$ with the standard orthonormal basis $\{\epsilon_1,...,\epsilon_l\}.$ Then
 $$\Delta=\{\pm\epsilon_i, \pm(\epsilon_i\pm\epsilon_j)|i\ne j\}$$
 and $\alpha_1=\epsilon_1-\epsilon_2,...,\alpha_{l-1}=\epsilon_{l-1}-\epsilon_l, \alpha_l=\epsilon_l.$
 In this case only $a_1=1.$ Obviously, $\Lambda_1=\epsilon_1$ is not an element of $Q_L.$

 For type $C_l,$ let $\E$ be the same as before. Then $$\Delta=\{\pm\sqrt{2}\epsilon_i, \pm\frac{1}{\sqrt{2}}(\epsilon_i\pm\epsilon_j)|i\ne j\}$$
 and $\alpha_1=\frac{\epsilon_1-\epsilon_2}{\sqrt{2}},\cdots, \alpha_{l-1}=\frac{\epsilon_{l-1}-\epsilon_l}{\sqrt{2}}, \alpha_l=\sqrt{2}\epsilon_l.$
 We know only $a_l=1.$  Then
 $$\Lambda_l=\frac{\epsilon_1+\cdots+\epsilon_l}{\sqrt{2}}$$
 and $Q_L=\sum_{i=1}^l\Z\sqrt{2}\epsilon_i.$  It is easy to see that $\Lambda_l$ does not belong to $Q_L.$ \end{proof}

\section{Representations of $K(\g,k)$}
\setcounter{equation}{0}

In this section we  establish  the rationality for vertex operator algebra for $K(\g,k)$ and  determine the irreducible modules for  $K(\g,k).$

\begin{thm}\label{rational} Let $\g$ be any finite dimensional simple Lie algebra and $k$ be any positive integer. Then

(1) The parafermion vertex operator algebra $K(\g,k)$ is rational.

(2) Any irreducible $K(\g,k)$-module is isomorphic to $M^{\Lambda,\lambda}$ for some $\Lambda\in P_+^k$ and $\lambda\in \Lambda+Q.$ Moreover $M^{\Lambda,\lambda}$ and $M^{\Lambda, \lambda+k\beta}$ are isomorphic for any $\beta\in Q_L,$ and  $M^{\Lambda,\lambda}$ and $M^{\Lambda^{(i)},\lambda+k\Lambda_i}$ are isomorphic for any $i$ with $a_i=1.$
\end{thm}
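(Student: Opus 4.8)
The plan is to bootstrap the rationality and classification of $K(\g,k)$ from the corresponding facts for $L_{\wg}(k,0)$ and for the lattice vertex operator algebra $V_{\sqrt{k}Q_L}$, using the realization of $V_{\sqrt{k}Q_L}\otimes K(\g,k)$ as a fixed-point subalgebra of $L_{\wg}(k,0)$ together with the recent orbifold and coset results quoted in the introduction. First I would make precise the claim, already announced in the introduction, that $V_{\sqrt{k}Q_L}\otimes K(\g,k)=L_{\wg}(k,0)^G$ for a suitable finite abelian group $G$ of automorphisms of $L_{\wg}(k,0)$: concretely, $G$ is the group of automorphisms $\exp(2\pi i\, h(0))$ for $h$ ranging over an appropriate finite subgroup of $\h$ dual to $Q_L$ relative to the form $k\<,\>$, so that the joint $h(0)$-eigenspace decomposition of $L_{\wg}(k,0)$ exactly cuts out $\oplus_{\lambda\in k Q_L} M_{\wh}(k,\lambda)\otimes K(\g,k)=V_{\sqrt{k}Q_L}\otimes K(\g,k)$. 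Since $L_{\wg}(k,0)$ is rational and $C_2$-cofinite, the orbifold theorem of \cite{CM}, \cite{M} gives that $L_{\wg}(k,0)^G=V_{\sqrt{k}Q_L}\otimes K(\g,k)$ is rational. Then, because $V_{\sqrt{k}Q_L}$ is a rational vertex operator algebra and a tensor factor, a standard argument (a tensor product of vertex operator algebras is rational iff each factor is, see \cite{DMZ}, \cite{DLM1}) yields that $K(\g,k)$ is rational. This proves (1).

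For (2), the classification of irreducible modules, I would combine the coset result \cite{KM} with the rationality just established. By \cite{KM}, since $M_{\wh}(k)$ (equivalently $V_{\sqrt{k}Q_L}$) is rational and its commutant in $L_{\wg}(k,0)$ is $K(\g,k)$, every irreducible $K(\g,k)$-module occurs as a summand in the $M_{\wh}(k)\otimes K(\g,k)$-module decomposition of some irreducible $L_{\wg}(k,0)$-module $L_{\wg}(k,\Lambda)$, $\Lambda\in P_+^k$. Invoking decomposition (\ref{4.1}), each such summand is $M^{\Lambda,\lambda}$ for some $\lambda\in\Lambda+Q$, and Lemma \ref{l4.1} guarantees each $M^{\Lambda,\lambda}$ is irreducible. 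Hence the complete list of irreducible $K(\g,k)$-modules is $\{M^{\Lambda,\lambda}\}$, finite in number by rationality (and finiteness of $P_+^k$ together with Proposition \ref{l4.1'}, which collapses the $\lambda$-dependence modulo $kQ_L$). The two stated identifications $M^{\Lambda,\lambda}\cong M^{\Lambda,\lambda+k\beta}$ for $\beta\in Q_L$ and $M^{\Lambda,\lambda}\cong M^{\Lambda^{(i)},\lambda+k\Lambda_i}$ for $a_i=1$ are then exactly Proposition \ref{l4.1'} and Theorem \ref{iden2}, so (2) follows by assembling these.

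The main obstacle, and the place where the argument must be made carefully rather than quoted, is the identification $V_{\sqrt{k}Q_L}\otimes K(\g,k)=L_{\wg}(k,0)^G$ as vertex operator algebras and the verification that $G$ is a \emph{finite} abelian group acting by genuine automorphisms — one must check that the relevant characters $\exp(2\pi i\,h(0))$ act trivially on $K(\g,k)$ (clear, since $h(0)$ kills the zero-weight space) and that their common fixed subalgebra is \emph{exactly} the $Q_L$-graded part, not something larger; this uses that $Q/kQ_L$ is finite and that the pairing between $\h/(\text{lattice dual to }kQ_L)$ and $Q_L$ is perfect. A secondary technical point is the passage from rationality of $V_{\sqrt{k}Q_L}\otimes K(\g,k)$ to rationality of $K(\g,k)$: one should confirm that the tensor-factor argument applies here, i.e. that $K(\g,k)$ is of CFT type with $K(\g,k)_0=\C\1$ so that admissible $K(\g,k)$-modules can be recovered from admissible modules of the tensor product by tensoring with the vacuum of $V_{\sqrt{k}Q_L}$. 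Everything else is bookkeeping with the decompositions (\ref{4.1})--(\ref{4.3}) already in hand.
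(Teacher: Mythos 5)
Your proposal is correct and follows essentially the same route as the paper: realize $V_{\sqrt{k}Q_L}\otimes K(\g,k)$ as $L_{\wg}(k,0)^G$ with $G$ the (finite abelian) dual group of $Q/kQ_L$ acting by the characters $\exp(2\pi i\,h(0))$ on the graded pieces, apply the orbifold rationality results of \cite{CM}, \cite{M}, peel off the rational tensor factor $V_{\sqrt{k}Q_L}$, and then use the coset theorem of \cite{KM} together with Lemma \ref{l4.1}, Proposition \ref{l4.1'} and Theorem \ref{iden2} to classify and identify the irreducible modules. The only slip is the parenthetical ``$M_{\wh}(k)$ (equivalently $V_{\sqrt{k}Q_L}$) is rational'': the Heisenberg algebra $M_{\wh}(k)$ is not rational, so the hypothesis of \cite{KM} must be checked for the mutually commutant pair $V_{\sqrt{k}Q_L}$ and $K(\g,k)$ (both rational, $C_2$-cofinite and of CFT type), exactly as the paper does; this does not affect the rest of your argument.
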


\begin{proof} (1) Let $G$ be the dual group of the abelian group $Q/kQ_L.$ Then $G$ is a finite subgroup
of automorphisms of  $L_{\wg}(k,0)$ such that $g\in G$ acts as $g(\beta_i+kQ_L)$ on $V_{\sqrt{k}Q_L+\frac{1}{\sqrt{k}}\beta_i}\otimes M^{0,\beta_i}.$ In other words, each $\beta_i+kQ_L$
is an irreducible character of $G.$  So $V_{\sqrt{k}Q_L+\frac{1}{\sqrt{k}}\beta_i}\otimes M^{0, \beta_i}$ in the decomposition
$$L_{\wg}(k,0)=\bigoplus_{i\in Q/kQ_L}V_{\sqrt{k}Q_L+\frac{1}{\sqrt{k}}\beta_i}\otimes M^{0, \beta_i}$$
corresponds to the character $\beta_i+kQ_L.$
In particular, $L_{\wg}(k,0)^G=V_{\sqrt{k}Q_L}\otimes K(\g,k).$

Since $G$ is a finite abelian group, it follows from Theorem 1 of \cite{M}, Theorem 5.24 of \cite{CM}, $V_{\sqrt{k}Q_L}\otimes K(\g,k)$ is rational and $C_2$-cofinite. As $V_{\sqrt{k}Q_L}$ is rational, the rationality of $K(\g,k)$ follows immediately.

(2) Since  $L_{\wg}(k,0),$ $V_{\sqrt{k}Q_L},$ $K(\g,k)$ are rational, $C_2$-cofinite, CFT type,  and  $K(\g,k)$, $V_{\sqrt{k}Q_L}$ are commutants each other in  $L_{\wg}(k,0),$ we know from Theorem \cite{KM} that
every irreducible $K(\g,k)$-module occurs in an irreducible $L_{\wg}(k,0)$-module. The rest follows from Proposition \ref{l4.1'} and Theorem \ref{iden2}.
\end{proof}

A complete identification of irreducible $K(\g,k)$-modules is given in \cite{ADJR} by using the quantum dimensions and global dimensions.

\end{document}